\newtheorem{proposition}{\bf{Proposition}}[section]
\newtheorem{remark}{\sc{Remark} }[section]
\newtheorem{definition}{\sc{Definition} }[section]
\newtheorem{corollary}{\bf{Corollary} }[section]
\begin{document}

\title[Explicit estimates to nonlinear radiation-type problems]{Explicit estimates for  solutions of nonlinear radiation-type problems}
\author{Luisa Consiglieri}
\address{Luisa Consiglieri, Independent Researcher Professor, 
 European Union}
\urladdr{\href{http://sites.google.com/site/luisaconsiglieri}{http://sites.google.com/site/luisaconsiglieri}}

\begin{abstract}
We establish the existence of weak solutions of a nonlinear radiation-type
boundary value problem for elliptic equation on divergence form with
discontinuous leading coefficient.
Quantitative estimates play a crucial role on the real
applications. Our objective is the derivation 
of explicit expressions of the involved constants   in the quantitative
estimates, the so-called absolute or universal bounds. The dependence on
the leading coefficient and on the size of the spatial domain is precise.
This work shows that the expressions of those constants
are not so elegant as we might expect.
\end{abstract}

\keywords{elliptic equation; estimate; regularity}
\subjclass[2010]{35R05, 35J65, 35J20, 35B50, 35D30} 

\maketitle

\section{Introduction}

Thermal effects on steady-state
physical and technological models, whatever they are from 
mechanical engineering, electrochemistry,
biomedical engineering, to mention a few,
 appear as an additional elliptic equation
with a nonlinear radiation-type boundary condition into the coupled PDE system
under study \cite{zamm,aduf,lap,epj}.
 These form a boundary value problem constituted by
an elliptic quasilinear
second order equation in divergence form with the leading coefficient
depending on the spatial variable and on the solution itself.
The problem of determining radiative effects
provides an interesting special case of a conormal derivative
boundary value problem for an elliptic divergence structure equation
 \cite{laitii}.
Here, we deal with the radiation-type condition  on a part of the boundary,
and on the remaining part the Neumann condition is taken into account.
Stationary heat conduction equation with the
radiation boundary condition (fourth power law) has been studied in 
two-dimensional \cite{milka} and three-dimensional \cite{simon} Lipschitz
domains.

In the existence theory, the quantitative estimates  of solutions to a linear elliptic equation in divergence form, with bounded and measurable coefficient,
play a crucial role. Indeed,
they enjoy a large interest in the literature (see for instance
  \cite{ben,caz,gia93,gt,lu,stamp63-64},
and the references therein).
Most
mathematicians have bearing to keep abstract the universal bounds along one
whole work. The values of the intervener constants are simply carried out.
It is forgotten that their values are crucial on the real applications
and/or the numerical analysis (see \cite{ko} and the references therein)
of the problems under study. Our objective is to fill such gap.

The outline of the present paper is as  follows.
We begin by stating the problem under study and its functional
framework in the next section. The Hilbert case is studied in Section \ref{diri}.
We derive $L^q$ (Section \ref{lpc}),
 $L^\infty$ (Section \ref{linfty}), 
 and $W^{1,q}$ (under $L^1$-data in Section \ref{l1data}) estimates for 
weak solutions.
 Finally, a $W^{1,p}$-estimate $(p<n/(n-1)$ for the Green kernel
and a $W^{1,q}$-estimate for weak solutions of linear boundary value problem,
the so-called mixed Robin-Neumann problem,
are obtained in Sections \ref{secg}
and \ref{lqc}, respectively.
Lipschitz domains, discontinuous leading coefficient, and 
$L^1$-data are the three mathematical shortcomings from the physical models
on the real world. It is taken them into account that our results are
stated.
 
\section{Statement of the problem}

Set $\Omega$ a  domain (that is, connected open set) in 
$\mathbb{R}^n$ ($n\geq 2$) of class $C^{0,1}$, and bounded.
Its boundary $\partial\Omega$ is constituted by two disjoint
 open $(n-1)$-dimensional sets,
$\Gamma_N$ and $\Gamma$,
such that $\partial\Omega=\bar\Gamma_N\cup \bar\Gamma$.
We consider $\Gamma_N$ over which the Neumann 
boundary condition is taken into account,
and $\Gamma$ over which the radiative effects may occur.

We study the following boundary value problem, in the sense of distributions,
\begin{eqnarray}
-\nabla\cdot(   \mathsf{A}
\nabla u)=f-\nabla\cdot{\bf f}&\mbox{ in }&\Omega;\label{omega}\\
(\mathsf{A}\nabla u-{\bf f})\cdot{\bf n}+b(u)=h&\mbox{ on }&\Gamma;
\label{robin}\\
(\mathsf{A}\nabla u-{\bf f})\cdot{\bf n}=g&\mbox{ on }&\Gamma_N, \label{gama}
\end{eqnarray}
where $\bf n$ is the unit outward normal to the boundary $\partial\Omega$.
Whenever the $(n\times n)$-matrix of the leading coefficient
is $\mathsf{A}=aI$, where $a$ is a real function and $I$ denotes
the identity matrix,  the elliptic  equation stands for isotropic materials.
Our problem includes the conormal derivative boundary value problem.
For that, it is sufficient to consider
the 
 situation $\Gamma=\partial\Omega$
(or equivalently $\Gamma_N=\emptyset$). The problem (\ref{omega})-(\ref{gama})
is the so-called mixed 
Robin-Neumann problem if the boundary condition (\ref{robin})
is linear, i.e.
\begin{equation}\label{bstar}
b(u)=b_*u,\qquad\mbox{for some }b_*>0.
\end{equation}

Set for any $p,\ell\geq 1$
\[V_{p,\ell}:=\{ v\in
W^{1,p}(\Omega):\ v\in L^{\ell}(\Gamma)\}
\]
the Banach space endowed with the norm 
\[
\| v\|_{V_{p,\ell}}:=\|   v\|_{p,\Omega}+
\|\nabla  v\|_{p,\Omega}+\|v\|_{\ell,\Gamma}.
\]
For the sake of simplicity, we denote by the same designation $v$
the trace of a function  $v\in W^{1,1}(\Omega)$.
For $p>1$, the space $V_{p,\ell}$ is reflexive by arguments given in
\cite{dpz}.
Observe that $V_{p,\ell}$ is a Hilbert space  equipped with the inner product
only if $p=\ell=2$.
The above norm is equivalent  to
 \begin{equation}\label{norm}
\| v\|_{1,p,\ell}:=\|\nabla  v\|_{p,\Omega}+\|v\|_{\ell,\Gamma},
\end{equation}
due to a Poincar\'e inequality \cite[Corollary 3]{chakib}:
\[
\|v\|_{p,\Omega}\leq P_p\left(\sum_{i=1}^n
\|\partial_iv\|_{p,\Omega}+|\Gamma|^{1/p-1}
\left|\int_\Gamma v\mathrm{ds}\right|\right).
\]
Here $|\cdot|$ stands for the $(n-1)$-Lebesgue measure.
Throughout this work, the significance of $|\cdot|$ also 
 stands for the Lebesgue measure of a set of $\mathbb{R}^n$.

By trace theorem,
\begin{eqnarray*}
V_{p,\ell}=W^{1,p}(\Omega),\quad \mbox{if } 1\leq\ell< p(n-1)/(n-p);\\
V_{p,\ell}\subset_{\not=}W^{1,p}(\Omega),\quad \mbox{if } \ell>p(n-1)/(n-p).
\end{eqnarray*}

For $1<q<n$,
 the best constants of the Sobolev and trace inequalities are, respectively, 
 \cite{tale,bond}
\begin{eqnarray*}
S_q&=&\pi^{-1/2}n^{-1/q}\left({q-1\over n-q}\right)^{1-1/q}\left[
{\Gamma(1+n/2)\Gamma(n)\over \Gamma (n/q)\Gamma(1+n-n/q)}\right]^{1/n};
\\
K_q&=&\pi^{(1-q)/2}\left({q-1\over n-q}\right)^{q-1}\left[
{\Gamma\left({q(n-1)\over 2(q-1)}\right)\Big/ \Gamma \left({
n-1\over 2(q-1)}\right)}\right]^{(q-1)/(n-1)},
\end{eqnarray*}
where $\Gamma$ stands for the Gamma function.
For $1^*=n/(n-1)$,  there exists the limit  constant
$S_1=\pi^{-1/2}n^{-1}[\Gamma(1+n/2)]^{1/n}$ \cite{tale}.
Hence, we introduce $S_{q,\ell}=S_q\max\{1+ P_q2^{(n-1)(1-1/q)},
P_q |\Gamma |^{1/q-1/\ell}\}$ and $K_{q,\ell}=K_q \max\{1+ P_q
2^{(n-1)(1-1/q)},
P_q |\Gamma |^{1/q-1/\ell}\}$ that verify
\begin{eqnarray} \label{sob}
\|v\|_{nq/(n-q),\Omega}\leq S_{q,\ell}\|v\|_{1,q,\ell}
;\\  \label{sobt}
\|v\|_{(n-1)q/(n-q),\partial\Omega}\leq K_{q,\ell}\|v\|_{1,q,\ell}.
\end{eqnarray}

\begin{definition}\label{def1}
We say that $u\in V_{p,\ell}$ is a weak solution to (\ref{omega})-(\ref{gama}), 
if it verifies
\begin{eqnarray}\label{pbu}
\int_{\Omega}    ( \mathsf{A}\nabla u)\cdot
\nabla v \mathrm{dx}+\int_{\Gamma}b(u) v \mathrm{ds}
=\int_{\Omega}{\bf f}\cdot\nabla v \mathrm{dx}+\\
+\int_{\Omega}fv \mathrm{dx}
+\int_{\Gamma_N}gv \mathrm{ds}+\int_{\Gamma}hv \mathrm{ds},
 \quad\forall v\in V_{p',\ell},\nonumber
\end{eqnarray}
where  
 ${\bf f}\in {\bf L}^{p}(\Omega)$,  $f\in L^{t}(\Omega)$, with
 $t=pn/(n+p)$
if $p>n/(n-1)$ and any $t>1$ if $1<p\leq n/(n-1)$,
  $g\in L^{s}(\Gamma_N)$, with $s=p(n-1)/n$ 
if $p>n/(n-1)$ and any $s>1$ if  $1<p\leq n/(n-1)$,
 and $h\in L^{\ell/(\ell-1)}(\Gamma)$.
\end{definition}
 
All terms on the right hand side of  (\ref{pbu})
 have sense, since the following embeddings hold:
 \begin{eqnarray*}
  W^{1,q}(\Omega)\hookrightarrow C(\bar\Omega)\quad\mbox{ for $q=p'>n,$
 i.e. $p<n/(n-1)$};\\
  \left.\begin{array}{l}
   {W}^{1,q}(\Omega)
 \hookrightarrow { L}^{q^*}(\Omega)\\
   W^{1,q}(\Omega)\hookrightarrow L^{q_*}(\partial\Omega)
   \end{array}\right\}\quad\mbox{ for $q=p'<n,$
i.e. $p>n/(n-1)$},
\end{eqnarray*}
 with  $q^*=qn/(n-q)$ and $q_*=q(n-1)/(n-q)$
 being the critical Sobolev and trace exponents, respectively, and
 $p'$ accounts for the conjugate exponent $p'=p/(p-1)$.
We observe that $q^*>1$ is arbitrary if $q=n$.

\begin{remark}
We emphasize that the existence of equivalence between the differential
(\ref{omega})-(\ref{gama}) and variational (\ref{pbu}) formulations
is only available under sufficiently data.
For instance, the Green formula may be applied if
$\mathsf{A}\nabla u \in {\bf L}^p(\Omega)$ and $\nabla\cdot(\mathsf{A}
\nabla u)\in L^p(\Omega)$.
\end{remark}

Assume
\begin{description}
\item[(A)]
 $\mathsf{A}=[A_{ij}]_{i,j=1,\cdots,n}
\in [L^\infty(\Omega)]^{n\times n}$ is uniformly elliptic, and uniformly bounded:
\begin{eqnarray}\label{amin}
\exists  a_\#>0,&&
A_{ij}(x)\xi_i\xi_j\geq a_\#|\xi|^2,
\quad\mbox{ a.e. }x\in\Omega,\ \forall \xi\in\mathbb{R}^n;\\
\exists a^\#>0,&&\|\mathsf{A}\|_{\infty,\Omega}\leq a^\#,\label{amax}
\end{eqnarray}
under the summation convention over repeated indices.
\item[(B)]
 $b:\Omega\times \mathbb{R}\rightarrow \mathbb{R}$ is a Carath\'eodory function
such that it is strictly monotone with respect to the last variable, and it has 
the following $(\ell-1)$-growthness properties:
\begin{eqnarray}\label{bmin}
\exists  b_\#>0, &&b(x,T){\rm sign}(T)\geq  b_\#|T|^{\ell-1};\\
\exists b^\# >0,&&|b(x,T)|\leq b^\#|T|^{\ell-1},\label{bmax}
\end{eqnarray} 
for a.e. $x\in\Omega$, and for all $T\in \mathbb{R}$.
\end{description}

\begin{remark}
If $b(T)=|T|^{\ell-2}T$, for all $T\in\mathbb{R}$, the property of
strong monotonicity occurs with $b_\#=2^{(2-\ell)}$ \cite[Lemma 3.3]{dpz}.
\end{remark}

\section{$V_{2,\ell}$-solvability ($\ell\geq 2$)}
\label{diri}

We establish the existence and uniqueness of weak solution
as well as its quantitative estimate. Although their proof is quite standard,
the  explicit expression of the bound is unknown, as far as we known.
\begin{proposition}\label{exist}
Let   ${\bf f}\in  {\bf L}^{2}(\Omega)$,  $f\in L^{t}(\Omega)$, with
 $t=(2^*)'$, i.e. $t=2n/(n+2)$
if $n>2$ and any $t>1$ if $n=2$, $g\in L^{s}(\Gamma_N)$, with $s=2(n-1)/n$ 
if $n>2$ and any $s>1$ if $n=2$.
Under the assumptions (A)-(B), there exists $u \in V_{2,\ell}$ 
being a weak solution to (\ref{omega})-(\ref{gama}), i.e.
 solving (\ref{pbu}) for all $v\in  V_{2,\ell}$. Moreover,
 the following estimate holds
\begin{eqnarray}
\label{cotau}
 {a_\#\over 2}\|\nabla u\|_{2,\Omega}^2+
{ b_\#(\ell -1)\over\ell}\| u\|_{\ell,\Gamma}^\ell\leq {1\over 
2a_\#}\left(
\|{\bf f}\|_{2,\Omega}+\mathcal{F}_{n}(
\|f\|_{t,\Omega},\|g\|_{s,\Gamma_N})\right)^2+\\
+ {\ell-1\over\ell b_\#^{1/(\ell-1)}}
\left(\|h\|_{\ell /(\ell-1),\Gamma}
+\mathcal{H}_{n}(\|f\|_{t,\Omega}, \|g\|_{s,\Gamma_N})\right)^{\ell/(\ell-1)}
:=\mathcal{A},\nonumber
\end{eqnarray}
where $\mathcal{F}_n(A,B)=\mathcal{H}_n(A,B)=S_{2,\ell}A+K_{2,\ell}B$
if $n>2$,  $\mathcal{F}_2(A,B)=\mathcal{H}_2( |\Omega |^{1/t'}A,
 |\Omega |^{1/(2s')}B)$, and
$\mathcal{H}_2(A,B)= S_{2t/(3t-2),\ell} A+ K_{2s/(2s-1),\ell}B$ if $t<2$.
In particular,  if $t\geq 2=n$, the estimate (\ref{cotau})
holds with $\mathcal{F}_2(A,B)=\mathcal{H}_2( |\Omega |^{1/2}A,
 |\Omega |^{1/(2s')}B)$, and $\mathcal{H}_2(A,B)= S_{1,\ell}|\Omega |^{1/2-1/t}
A+K_{2s/(2s-1),\ell}B$. 
\end{proposition}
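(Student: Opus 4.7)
The plan is to recast (\ref{pbu}) as an abstract operator equation in the dual of the reflexive space $V_{2,\ell}$. Define $T: V_{2,\ell}\to V_{2,\ell}^*$ by
$$\langle Tu, v\rangle := \int_\Omega (\mathsf{A}\nabla u)\cdot \nabla v\,\mathrm{dx} + \int_\Gamma b(\cdot,u)\,v\,\mathrm{ds},$$
and let $L\in V_{2,\ell}^*$ be the functional obtained from the data side of (\ref{pbu}); the integrability hypotheses on $\mathbf{f}$, $f$, $g$, $h$ together with (\ref{sob})--(\ref{sobt}) guarantee $L\in V_{2,\ell}^*$. Under (A)--(B), $T$ is continuous, strictly monotone (by (\ref{amin}) together with the strict monotonicity of $b$) and coercive on $V_{2,\ell}$ because
$$\langle Tv, v\rangle \geq a_\#\|\nabla v\|_{2,\Omega}^2 + b_\#\|v\|_{\ell,\Gamma}^\ell,$$
which grows super-linearly in $\|v\|_{1,2,\ell}$. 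The Browder--Minty theorem then delivers a unique $u\in V_{2,\ell}$ with $Tu=L$; uniqueness additionally follows directly by subtracting two solutions and invoking strict monotonicity.

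To obtain (\ref{cotau}) I test (\ref{pbu}) with $v=u$ and apply (\ref{amin})--(\ref{bmin}) to the left, yielding
$$a_\#\|\nabla u\|_{2,\Omega}^2 + b_\#\|u\|_{\ell,\Gamma}^\ell \leq \int_\Omega \mathbf{f}\cdot\nabla u\,\mathrm{dx} + \int_\Omega fu\,\mathrm{dx} + \int_{\Gamma_N} gu\,\mathrm{ds} + \int_\Gamma hu\,\mathrm{ds}.$$
Cauchy--Schwarz handles the $\mathbf{f}$ term with factor $\|\nabla u\|_{2,\Omega}$, and H\"older on $\Gamma$ handles the $h$ term with factor $\|u\|_{\ell,\Gamma}$. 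For $f$ and $g$, H\"older on $\Omega$ and on $\Gamma_N$ followed by the Sobolev and trace embeddings (\ref{sob})--(\ref{sobt}) with $q=2$ gives, for $n>2$,
$$\int_\Omega fu + \int_{\Gamma_N} gu \leq \bigl(S_{2,\ell}\|f\|_{t,\Omega} + K_{2,\ell}\|g\|_{s,\Gamma_N}\bigr)\bigl(\|\nabla u\|_{2,\Omega}+\|u\|_{\ell,\Gamma}\bigr),$$
where the factor $(\|\nabla u\|_{2,\Omega}+\|u\|_{\ell,\Gamma})$ is then split and attached to the corresponding coercivity term. Young's inequality, applied with parameter $\epsilon=a_\#$ on the gradient side and with exponents $(\ell',\ell)$ tuned by $\delta=b_\#^{1/(\ell-1)}$ on the boundary side, absorbs $(a_\#/2)\|\nabla u\|_{2,\Omega}^2$ and $(b_\#/\ell)\|u\|_{\ell,\Gamma}^\ell$ on the left, producing exactly the constants $1/(2a_\#)$ and $(\ell-1)/(\ell b_\#^{1/(\ell-1)})$ that appear in (\ref{cotau}).

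The main obstacle is the two-dimensional case, where the critical Sobolev exponent degenerates and $W^{1,2}\not\hookrightarrow L^\infty$, so the estimate cannot be read off directly from (\ref{sob}) with $q=2$. The remedy is to first use H\"older on $\Omega$ with factor $|\Omega|^{1/t'}$ (so that $f$ effectively acts against $u$ in a subcritical $L^q$) and then invoke (\ref{sob}) at the shifted exponent $q=2t/(3t-2)<2$, where the embedding into $L^{q^*}$ remains valid; the analogous trick on $\Gamma_N$ produces $K_{2s/(2s-1),\ell}$. In the borderline subcase $t\geq 2=n$ one combines the H\"older factor $|\Omega|^{1/2-1/t}$ with the limit constant $S_1$ from \cite{tale}. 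Carrying this bookkeeping through gives the alternative definitions of $\mathcal{F}_2$ and $\mathcal{H}_2$; nothing is conceptually new, but the explicit constants become, as the abstract announces, less than elegant.
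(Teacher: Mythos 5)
Your proposal follows the paper's proof essentially line for line: Browder--Minty for existence and uniqueness, testing with $v=u$, Cauchy--Schwarz/H\"older plus (\ref{sob})--(\ref{sobt}) at $q=2$ for $n>2$, a H\"older shift to the subcritical exponent $q=2t/(3t-2)$ for $n=2$, and Young's inequality to split off the coercivity terms. The only small slip is the Young parameter on the boundary term, which should be $\delta=b_\#^{1/\ell}$ (so that $\delta^\ell=b_\#$ and $\delta^{\ell'}=b_\#^{1/(\ell-1)}$) rather than $\delta=b_\#^{1/(\ell-1)}$; with that correction the constants match (\ref{cotau}) exactly.
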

\begin{proof}
The existence and uniqueness of a weak solution
$u\in V_{2,\ell}$ is consequence of the Browder-Minty theorem,
since the functional $T:V_{2,\ell}\rightarrow (V_{2,\ell})'$ defined by
\[
T(v)=
\int_{\Omega}    ( \mathsf{A}\nabla u)\cdot
\nabla v \mathrm{dx}+\int_{\Gamma}b(u) v \mathrm{ds}
\]
is strictly
monotone, continuous, bounded and coercive.

 Taking $v=u\in V_{2,\ell}$ as a test function in (\ref{pbu}),
 using the H\"older inequality  we obtain
 \begin{eqnarray}\label{pbuab}
 {a_\#}\|\nabla u\|_{2,\Omega}^2+
{ b_\#}\| u\|_{\ell,\Gamma}^\ell\leq 
 \|{\bf f}\|_{2,\Omega}\|\nabla u\|_{2,\Omega}+\\
+\|h\|_{\ell/(\ell-1),\Gamma}\| u\|_{\ell,\Gamma}
 +\|f\|_{t,\Omega}\|u\|_{t',\Omega}
+\|g\|_{s,\Gamma_N}\|u\|_{s',\Gamma_N}.\nonumber
\end{eqnarray}

For $n>2$,  making use of (\ref{sob}) and (\ref{sobt}) with $q=2$, we get
 \begin{eqnarray*}
 {a_\#\over 2}\|\nabla u\|_{2,\Omega}^2+
{ b_\#\over\ell\,'}\| u\|_{\ell,\Gamma}^\ell\leq 
 {1\over 2a_\#}\left(\|{\bf f}\|_{2,\Omega}+
 S_{2,\ell}\|f\|_{t,\Omega}+K_{2,\ell}
\|g\|_{s,\Gamma_N}\right)^2+\\
+{1\over \ell\,' b_\#^{1/(\ell-1)}}
\left(\|h\|_{\ell/(\ell-1),\Gamma}+S_{2,\ell}\|f\|_{t,\Omega}+K_{2,\ell}
\|g\|_{s,\Gamma_N}\right)^{\ell/(\ell-1)}.
\end{eqnarray*}
Therefore, (\ref{cotau}) follows.

Consider the case of dimension $n=2$.
For $t,s>1$, using the H\"older inequality in (\ref{sob})
with $q=2t'/(t'+2)$ 
if $t'\geq 2$, and in (\ref{sobt}) for any $s>1$,
we have
\begin{eqnarray*}
\|u\|_{t',\Omega}\leq
  S_{{2t\over 3t-2},\ell}\| u\|_{1,2t/(3t-2),\ell}\leq
  S_{{2t\over 3t-2},\ell}\left(
|\Omega |^{1/t'}\|\nabla u\|_{2,\Omega}+\|u\|_{\ell,\Gamma}\right) ;\\
\|u\|_{s',\Gamma_N}\leq K_{{2s\over 2s-1},\ell}\|u\|_{1,2s/(2s-1),\ell} \leq
  K_{{2s\over 2s-1},\ell}\left(|\Omega |^{1/(2s')}\|\nabla u\|_{2,\Omega} 
+\|u\|_{\ell,\Gamma}\right) .\end{eqnarray*}
  Inserting the above inequalities in (\ref{pbuab}), it results in (\ref{cotau}).
  
  Finally, if $t> 2$, we have 
\[
\|u\|_{t',\Omega}\leq 
|\Omega |^{1/2-1/t}\|u\|_{2,\Omega}\leq
|\Omega |^{1/2-1/t} S_{1,\ell}\left(
|\Omega |^{1/2}\|\nabla u\|_{2,\Omega}+\|u\|_{\ell,\Gamma}\right).
\]
This concludes the proof of Proposition \ref{exist}.
\end{proof}

\begin{remark}\label{sol}
Proposition \ref{exist} remains valid if the assumption
$h\in L^{\ell/(\ell-1)}(\Gamma)$ is replaced by 
$h\in L^{s}(\Gamma)$, with the estimate (\ref{cotau}) being rewritten with
\begin{eqnarray}
\label{cotausol}
{\mathcal A}={1\over 2a_\#}\left(
\|{\bf f}\|_{2,\Omega}+\mathcal{F}_{n}(
\|f\|_{t,\Omega},\|g\|_{s,\Gamma_N}+\|h\|_{s,\Gamma})\right)^2+\\
+ {\ell-1\over\ell b_\#^{1/(\ell-1)}}
\left[\mathcal{H}_{n}(\|f\|_{t,\Omega}, \|g\|_{s,\Gamma_N}+
\|h\|_{s,\Gamma})\right]^{\ell\,'}.\nonumber
\end{eqnarray}
\end{remark}

\begin{corollary}\label{coruq}
Under the conditions of Proposition \ref{exist},
we have
\begin{eqnarray}
\label{cotausup}\qquad
\|u\|_{2p/(p-2),\Omega}\leq
  S_{{2pn\over 2p+n(p-2)},\ell}\left(
|\Omega |^{{1\over n}-{1\over p}}\left({2\mathcal{A}\over a_\#}\right)^{1/2}
+\left({\ell\;'\mathcal{A}\over b_\#}\right)^{1/\ell}\right);\\
\|u\|_{2s',\Omega}\leq
  K_{{2sn\over 2s+(n-1)(s-1)},\ell}\left(
|\Omega |^{s-n+1\over 2ns}\left({2\mathcal{A}\over a_\#}\right)^{1/2}
+\left({\ell\;'\mathcal{A}\over b_\#}\right)^{1/\ell}\right),
\label{cotaus}
\end{eqnarray}
for $p\geq n>2$, $p>n=2$, $s\geq n-1>1$, and $s>1$ ($n=2$).
\end{corollary}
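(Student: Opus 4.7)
The plan is to reduce both inequalities to direct applications of the Sobolev inequality (\ref{sob}) and the trace inequality (\ref{sobt}), after choosing the integrability exponent $q$ so that the corresponding critical exponent matches the target. The starting point is the energy estimate (\ref{cotau}) of Proposition \ref{exist}, which, read as two separate inequalities, yields the a priori bounds
\[
\|\nabla u\|_{2,\Omega}\le \left(\frac{2\mathcal{A}}{a_\#}\right)^{1/2},\qquad
\|u\|_{\ell,\Gamma}\le \left(\frac{\ell'\mathcal{A}}{b_\#}\right)^{1/\ell}.
\]
These will deliver the two parenthesised terms that appear on the right-hand side of both target inequalities.

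For (\ref{cotausup}), I would solve $qn/(n-q)=2p/(p-2)$ for $q$; a short computation gives $1/q=1/2+1/n-1/p$, i.e.\ $q=2pn/(2p+n(p-2))$, which lies in $(1,n)$ precisely under the hypothesis $p\ge n>2$ (resp.\ $p>n=2$), so that $S_{q,\ell}$ is available. Since $q\le 2$, H\"older's inequality on $\Omega$ gives $\|\nabla u\|_{q,\Omega}\le |\Omega|^{1/q-1/2}\|\nabla u\|_{2,\Omega}=|\Omega|^{1/n-1/p}\|\nabla u\|_{2,\Omega}$. Plugging this into (\ref{sob}), together with the bound on $\|u\|_{\ell,\Gamma}$, produces the claimed estimate after inserting the two a priori bounds displayed above.

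For (\ref{cotaus}) the argument is entirely analogous, but now I solve $(n-1)q/(n-q)=2s'=2s/(s-1)$ for $q$, which yields $1/q=(n+1)/(2n)-(n-1)/(2ns)$, i.e.\ $q=2sn/(2s+(n-1)(s-1))$; the hypothesis $s\ge n-1$ (resp.\ $s>1$ when $n=2$) ensures $q\le 2$ with $q\in(1,n)$, and the corresponding H\"older factor becomes $|\Omega|^{1/q-1/2}=|\Omega|^{(s-n+1)/(2ns)}$. Applying the trace inequality (\ref{sobt}) and combining with the a priori bounds gives (\ref{cotaus}).

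There is no real obstacle beyond bookkeeping. The only points deserving care are (i) verifying that for each target exponent the dual $q$ lands in the admissible range $1<q<n$ under the stated restrictions on $p$ and $s$, so that the constants $S_{q,\ell}$ and $K_{q,\ell}$ of the preceding section are well defined, and (ii) identifying the H\"older exponent correctly so that the power of $|\Omega|$ matches the formula given in the statement. Both are routine but must be checked case by case (in particular the borderline $n=2$ situation).
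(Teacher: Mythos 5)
Your proof is correct and follows essentially the same route as the paper: choose $q=2pn/[2p+n(p-2)]$ (resp.\ $q=2sn/[2s+(n-1)(s-1)]$) so that the Sobolev (resp.\ trace) critical exponent hits the target, apply H\"older on $\Omega$ to pass from $\|\nabla u\|_{q}$ to $|\Omega|^{1/q-1/2}\|\nabla u\|_{2}$ using $q\le 2$, and insert the two separate bounds extracted from (\ref{cotau}). The bookkeeping on exponents and the admissibility ranges for $q$ is carried out correctly.
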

\begin{proof}
Making use of (\ref{sob}) with $q=2pn/[2p+n(p-2)]$ if $p>2$,
and the H\"older inequality for $p\geq n$, we obtain
\begin{eqnarray*}
\|u\|_{2p/(p-2),\Omega}\leq
  S_{{2pn\over 2p+n(p-2)},\ell}\| u\|_{1,{2pn\over 2p+n(p-2)},\ell}\leq\\
\leq  S_{{2pn/[2p+n(p-2)]},\ell}\left(
|\Omega |^{1/n-1/p}\|\nabla u\|_{2,\Omega}+\|u\|_{\ell,\Gamma}\right).
\end{eqnarray*}
Applying (\ref{cotau}) in the above inequality, 
we conclude (\ref{cotausup}).

Making use of (\ref{sobt}) with $q=2sn/[2s+(n-1)(s-1)]$ if $s>1$,
and the H\"older inequality for $s\geq n-1$, we obtain
\begin{eqnarray*}
\|u\|_{2s/(s-1),\partial\Omega}\leq
  K_{{2sn\over 2s+(n-1)(s-1)},\ell}\| u\|_{1,{2sn\over 2s+(n-1)(s-1)},\ell}\leq\\
\leq  K_{{2sn/[2s+(n-1)(s-1)]},\ell}\left(
|\Omega |^{(s-n+1)/(2ns)}\|\nabla u\|_{2,\Omega}+\|u\|_{\ell,\Gamma}\right).
\end{eqnarray*}
Thus, (\ref{cotaus}) holds as before.
\end{proof}

\section{$L^{q}$-estimates ($q<2(n-1)p/[2(n-1)-p]$, 
$2< p<2(n-1)$)}
\label{lpc}

Section \ref{diri} ensures the existence of a weak solution,
$u\in L^{2p/(p-2)}(\Omega)$,  to (\ref{omega})-(\ref{gama})
in accordance with Definition \ref{def1}, only if $p\geq n>2$.
Let us improve that.

First, 
let us introduce  the Marcinkiewicz space, $L^*_p(\Omega)$, which is
Banach space of the measurable functions
that have finite the following norm \cite{gt}:
\[
\|v\|_{*,p,\Omega}:=\sup _{t>0} t |
\Omega[|v|>t]|^{1/p},
\]
for $p>1$ and $0<\varepsilon\leq p-1$, and $\Omega[|v|>t]:=\{x\in\Omega:
\ |v(x)|>t\}$.
Moreover, we recall the following property
\begin{equation}\label{star}
\|v\|_{p-\varepsilon,\Omega}\leq
 \left({p\over\varepsilon}\right)^{1/(p-\varepsilon)}|\Omega|^{
\varepsilon/[p(p-\varepsilon)]}
\|v\|_{*,p,\Omega},\quad\forall v\in L^*_p(\Omega).
\end{equation}

 We derive the explicit estimates
via the analysis of the decay of the level sets of the solution
\cite{stamp63-64}, extending the global estimate established in \cite{da}
of $(u,u|_{\partial\Omega})$ in $L^{np/(n-p)}(\Omega)\times
L^{(n-1)p/(n-p)}(\partial\Omega)$ if $f\in L^{p}(\Omega)$ with
$2\leq p<n$.
\begin{proposition}\label{qestimates}
Let $2<p,r<2(n-1)$, $u \in H^{1}(\Omega)$ 
be any weak solution to (\ref{omega})-(\ref{gama})
in accordance with Definition \ref{def1},
and (\ref{amin}) and (\ref{bmin}) be fulfilled. If 
 ${\bf f}\in  {\bf L}^{p}(\Omega)$,  $f\in L^{np/(p+n)}(\Omega)$, 
 $g\in L^{(n-1)p/n}(\Gamma_N)$, and  $h\in L^{r}(\Gamma)$,
 then we have, for every $q<2(n-1)/[n-2-2(n-1)\delta]:=Q$,
\begin{equation}\label{cotaps}
\|u\|_{q,\Omega}+\|u\|_{q,\partial\Omega}\leq \mathcal{K}_{q,\delta}
\left(\mathcal{B}+2|\bar\Omega[|u|>1]|^{{n-2\over 2(n-1)}-\delta}
\right),
\end{equation}
where $\delta=\min\{1/ 2-1/ p,1/2-1/r\}$, and
 the positive constants $\mathcal{K}$ and $\mathcal{B}$ are
\begin{eqnarray*}&&
\mathcal{K}_{q,\delta}=
2^{n-2\over n-2-2(n-1)\delta} 
\left({Q\over Q-q}\right)^{1/q}
\left( |\Omega|^{{1\over q}-{1\over Q}}+|\partial\Omega|^{
{1\over q}-{1\over Q}}\right);\\
\mathcal{B}&=&( |\Omega|^{n-2\over 2(n-1) n} S_{2,2}+K_{2,2})
\left[ \left({1\over a_\#}
 +{1\over \sqrt{a_\#b_\#}}\right)
(\|{\bf f}\|_{p,\Omega}+ C_{n,p,r})|\Omega|^{{1\over 2}-{1\over p}-\delta}
\right.\\
&&\qquad\left. +\left({1\over b_\#}
+{1\over  \sqrt{a_\#b_\#}}\right)
(\|h\|_{r,\Gamma}+ C_{n,p,r})
|\Gamma|^{1/2-1/r-\delta}
\right];\\&&
C_{n,p,r}=S_{p',r'}\|f\|_{np/(p+n),\Omega}+K_{p',r'}
 \|g\|_{(n-1)p/n,\Gamma_N}
,\quad\forall n\geq 2.
\end{eqnarray*}
\end{proposition}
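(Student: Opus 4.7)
The plan is to run the Stampacchia-De Giorgi truncation method simultaneously on the interior and on the boundary trace of $u$, extract a Marcinkiewicz-type decay estimate for the joint distribution function, and convert it to an $L^q$ bound via property (\ref{star}). For $k\ge 1$ I take $v_k:=\mathrm{sign}(u)(|u|-k)_+$ as test function in (\ref{pbu}). Since $u\in H^1(\Omega)$ with trace in $L^\ell(\Gamma)$, $v_k$ lies in $V_{2,\ell}\subset V_{p',\ell}$ (using $2<p<2(n-1)$, which gives $1<p'\le 2$). Its gradient is supported on $A_k:=\Omega[|u|>k]$ and equals $\nabla u$ there; its trace lives on $\Gamma_k:=\Gamma[|u|>k]$. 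Ellipticity (\ref{amin}) combined with the pointwise bound $b(u)v_k\ge b_\#|u|^{\ell-1}|v_k|\ge b_\#|v_k|^\ell$ on $\Gamma_k$ (which uses $\ell\ge 2$ and $|u|\ge |v_k|$) yields
\[
a_\#\|\nabla v_k\|_{2,\Omega}^{2}+b_\#\|v_k\|_{\ell,\Gamma}^{\ell}\le \text{RHS}.
\]

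Next, each source integral is localized to its level set. Hölder gives $\int_\Omega \mathbf{f}\cdot\nabla v_k\le\|\mathbf{f}\|_p\|\nabla v_k\|_{2}|A_k|^{1/2-1/p}$ and $\int_\Gamma h v_k\le\|h\|_{r}\|v_k\|_{\ell,\Gamma}|\Gamma_k|^{1-1/r-1/\ell}$, while the integrals of $f$ and $g$ are handled by combining the Sobolev (\ref{sob}) and trace (\ref{sobt}) embeddings of $v_k$ (producing $S_{p',r'}$ and $K_{p',r'}$ inside $C_{n,p,r}$) with the same Hölder localization. A weighted Young inequality then absorbs the $v_k$-factors into the left-hand side; the three coefficients $1/a_\#$, $1/b_\#$, $1/\sqrt{a_\# b_\#}$ appearing in $\mathcal{B}$ arise precisely from the three absorption modes — into $a_\#$ for a pure gradient term, into $b_\#$ for a pure boundary term, and into $\sqrt{a_\# b_\#}$ for a mixed cross-term. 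The monotonizations $|A_k|^{1/2-1/p}\le|\Omega|^{1/2-1/p-\delta}|A_k|^{\delta}$ and $|\Gamma_k|^{1/2-1/r}\le|\Gamma|^{1/2-1/r-\delta}|\Gamma_k|^{\delta}$ then collapse the exponents onto the single $\delta=\min\{1/2-1/p,1/2-1/r\}$, delivering
\[
\|\nabla v_k\|_{2,\Omega}+\|v_k\|_{\ell,\Gamma}^{\ell/2}\le \widetilde{\mathcal{B}}_{1}|A_k|^{\delta}+\widetilde{\mathcal{B}}_{2}|\Gamma_k|^{\delta},
\]
with $\widetilde{\mathcal{B}}_i$ assembled in exactly the form that appears inside $\mathcal{B}$.

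Third, the Sobolev and trace inequalities with $q=\ell=2$, after switching $\|v_k\|_{2,\Gamma}\le|\Gamma|^{1/2-1/\ell}\|v_k\|_{\ell,\Gamma}$, bound $\|v_k\|_{2^{*},\Omega}$ and $\|v_k\|_{2_{*},\partial\Omega}$ by $S_{2,2}$ resp.\ $K_{2,2}$ times the preceding quantity. The Chebyshev-type inequalities
\[
(t-k)|A_t|^{1/2^{*}}\le\|v_k\|_{2^{*},\Omega},\qquad (t-k)|\Gamma_t|^{1/2_{*}}\le\|v_k\|_{2_{*},\partial\Omega},
\]
together with the identity $1/2_{*}=(n-2)/(2(n-1))=1/Q+\delta$, convert this into a decay estimate for the distribution function of $u$. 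Specializing to $k=1$, $t>1$, and absorbing $|A_1|^{\delta}$ and $|\Gamma_1|^{\delta}$ into the joint quantity $|\bar\Omega[|u|>1]|^{1/Q}$, one reads off the Marcinkiewicz-norm bound $\|u\|_{*,Q,\Omega}+\|u\|_{*,Q,\partial\Omega}\le C\bigl(\mathcal{B}+2|\bar\Omega[|u|>1]|^{1/Q}\bigr)$; the prefactor $2^{(n-2)/(n-2-2(n-1)\delta)}=2^{Q/2_{*}}$ records the conversion between $|A_t|^{1/Q}+|\Gamma_t|^{1/Q}$ and $|A_t\cup\Gamma_t|^{1/Q}$. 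Finally, (\ref{star}) applied separately on $\Omega$ and on $\partial\Omega$ with $p=Q$, $\varepsilon=Q-q$ produces the factor $(Q/(Q-q))^{1/q}(|\Omega|^{1/q-1/Q}+|\partial\Omega|^{1/q-1/Q})$ in $\mathcal{K}_{q,\delta}$, and (\ref{cotaps}) follows.

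The main obstacle is the book-keeping: forcing the four source data, with their four distinct integrability exponents $p$, $np/(n+p)$, $(n-1)p/n$ and $r$, to funnel through Hölder, Sobolev/trace, and Young into precisely the explicit constant $\mathcal{B}$ as displayed, and arranging all interior and boundary measure exponents to coalesce onto the single Marcinkiewicz exponent $1/Q=(n-2)/(2(n-1))-\delta$ so that the final constant $\mathcal{K}_{q,\delta}$ emerges in the sharp form claimed.
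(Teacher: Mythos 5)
Your overall strategy matches the paper: De~Giorgi truncation with $v_k=\mathrm{sign}(u)(|u|-k)^+$ for $k\geq k_0=1$, H\"older localization of the data on level sets, Sobolev/trace embeddings with the explicit constants, a Stampacchia-type decay lemma (you need \cite[Lemma~4.1~(iii)]{stamp63-64}, which you gesture at but do not name), and finally the Marcinkiewicz property (\ref{star}) to produce the $L^q$ bound. However, there is a genuine gap at the very first estimate, in your handling of the boundary radiation term $\int_\Gamma b(u)v_k\,\mathrm{ds}$. You bound it below by $b_\#\|v_k\|_{\ell,\Gamma}^\ell$ via $|u|^{\ell-1}\geq|v_k|^{\ell-1}$. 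The paper instead exploits the fact that $|u|>k\geq 1$ on $\Gamma(k)$ together with $\ell\geq 2$ to write $|u|^{\ell-1}(|u|-k)^+\geq|u|\,(|u|-k)^+\geq(|u|-k)^2$, giving the \emph{pure $L^2$} boundary energy $b_\#\||u|-k\|_{2,\Gamma(k)}^2$. This is decisive: with both sides of the energy inequality (\ref{energy}) quadratic in $\|\nabla u\|_{2,\Omega(k)}$ and $\||u|-k\|_{2,\Gamma(k)}$, the subsequent bound (\ref{nuakn}) on $\|(|u|-k)^+\|_{1,2,2}$ comes out with exactly the constants $S_{2,2}$, $K_{2,2}$, $1/a_\#$, $1/b_\#$, $1/\sqrt{a_\#b_\#}$ appearing in $\mathcal{B}$.

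With your $L^\ell$ boundary term the exponents do not match: the left side carries $\|v_k\|_{\ell,\Gamma}^\ell$ (hence $\|v_k\|_{\ell,\Gamma}\lesssim(\cdot)^{1/\ell}$), not the $\|v_k\|_{2,\Gamma}$ you need for the $\|\cdot\|_{1,2,2}$ norm, and your proposed fix $\|v_k\|_{2,\Gamma}\leq|\Gamma|^{1/2-1/\ell}\|v_k\|_{\ell,\Gamma}$ introduces an $\ell$-dependent $|\Gamma|^{1/2-1/\ell}$ factor that simply does not appear in the claimed $\mathcal{B}$. The same problem infects your H\"older split of the $h$-term: you use $\int_\Gamma hv_k\leq\|h\|_r\|v_k\|_{\ell,\Gamma}|\Gamma_k|^{1-1/r-1/\ell}$, which produces the exponent $1-1/r-1/\ell$, whereas the statement's $|\Gamma|^{1/2-1/r-\delta}$ factor comes precisely from the paper's $\|h\|_{2,\Gamma(k)}\||u|-k\|_{2,\Gamma(k)}$ followed by $\|h\|_{2,\Gamma(k)}\leq\|h\|_{r,\Gamma}|\Gamma(k)|^{1/2-1/r}$. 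In short: the reduction of $|u|^{\ell-1}$ to a linear factor on $\Gamma(k)$ using $k_0=1$ is the one specific idea you are missing, and without it the chain of inequalities does not close onto the constant $\mathcal{B}$ as stated.
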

\begin{proof}
Let $k\geq k_0=1$. Hence forth we use the notation
   $A(k)=\{x\in A:\ |u(x)|>k\}$, with the set $A$ being either
   $\Omega$, $\Gamma_N$, $\Gamma$, $\partial\Omega$ or $\bar \Omega$.
Choosing $v={\rm sign}(u)(|u|-k)^+={\rm sign}(u)\max\{|u|-k,0\}\in
 H^1(\Omega)$ as a 
test function in (\ref{pbu}), then $\nabla v=\nabla u\in {\bf L}^2(
 \Omega (k))$. 
Since $|u|>1$ a.e. on $\Gamma(k)$, 
taking  (\ref{amin}) and (\ref{bmin}) into account, we deduce
\begin{eqnarray}\label{varak}
a_\#\int_{ \Omega (k)}|\nabla u|^2\mathrm{dx}+
b_\#\int_{ \Gamma (k)}(|u|-k)^{2}\mathrm{ds}
\leq \\
\leq \|{\bf f}\|_{2,\Omega(k)}\|\nabla u\|_{2,\Omega(k)}
+\|f\|_{{np\over p+n},\Omega}\|(|u|-k)^+\|_{{np\over np-n-p},\Omega} 
+\nonumber\\+
\|g\|_{{(n-1)p\over n}, \Gamma_N}\|(|u|-k)^+\|_{{(n-1)p\over np-n-p},\Gamma_N}
+\|h\|_{2, \Gamma(k)}\||u|-k\|_{2,\Gamma(k)}.\nonumber
\end{eqnarray}
Using the H\"older inequality, it follows that ($ p,r>2$)
\begin{eqnarray*}
\|{\bf f}\|_{2,\Omega(k)}
\leq \|{\bf f}\|_{p,\Omega}|\Omega(k)|^{1/2-1/p};\\
\|h\|_{2,\Gamma(k)}
\leq \|h\|_{r,\Gamma}|\Gamma(k)|^{1/2-1/r}.
\end{eqnarray*}

Making use of (\ref{sob})-(\ref{sobt}) and
$(|u|-k)^+\in V_{p',r'}$ with $p'<2\leq n$ and $r'<2$, 
and  the H\"older inequality, we get
\begin{eqnarray*}
\|(|u|-k)^+\|_{{np\over n(p-1)-p},\Omega}
\leq S_{p',r'}\left(|\Omega(k)|^{{1/ p'}-{1/ 2}}
\|\nabla u\|_{2,\Omega(k)}+
\right. \\ \left.
+|\Gamma (k)|^{{1/ r'}-{1/2}}
\||u|-k\|_{2,\Gamma (k)}\right);\\
\|(|u|-k)^+\|_{{(n-1)p\over n(p-1)-p},\Gamma_N}
\leq K_{p',r'}\left(|\Omega(k)|^{{1/ p'}-{1/ 2}}
\|\nabla u\|_{2,\Omega(k)}+ \right. \\ \left.
+|\Gamma (k)|^{{1/r'}-{1/2}}
\||u|-k\|_{2,\Gamma (k)}\right). 
\end{eqnarray*}
Inserting last four inequalities into (\ref{varak}) we obtain
\begin{eqnarray}\label{energy}
{a_\#}\|\nabla u\|_{2,\Omega(k)}^2+{b_\#}\| |u|-k
\|_{2,\Gamma(k)}^2\leq {\left(
\|{\bf f}\|_{p,\Omega}+ C_{n,p,r}\right)^2
\over a_\#} |\Omega(k)|^{1-{2\over p}}\\
+{\left(
\|h\|_{r,\Gamma}+ C_{n,p,r}\right)^{2}
\over b_\#}|\Gamma(k)|^{1-2/r} 
,\qquad \forall p,r>2.\nonumber
\end{eqnarray}
 It results in
\begin{eqnarray}\label{nuakn}
\|( |u|-k)^+\|_{1,2,2}\leq \left[ \left({1\over a_\#}
 +{1\over \sqrt{a_\#b_\#}}\right)
{(
\|{\bf f}\|_{p,\Omega}+ C_{n,p,r})|\Omega(k)|^{1/2-1/p}
\over (|\Omega(k)|+|\Gamma(k)|)^\delta} +\right.\\
\left.+ \left({1\over b_\#}
 +{1\over \sqrt{a_\#b_\#}}\right){(
\|h\|_{r,\Gamma}+ C_{n,p,r})|\Gamma(k)|^{1/2-1/r}
\over (|\Omega(k)|+|\Gamma(k)|)^\delta} \right]|\bar\Omega(k)|^\delta.\nonumber
\end{eqnarray}

For  $ h > k > k_0$, we have
\begin{equation}\label{aalfa1}
(h-k)|\bar \Omega (h)|^{1/\alpha}\leq 
\||u|-k\|_{\alpha,\Omega(k)}+
\||u|-k\|_{\alpha,\partial\Omega(k)},\quad \forall\alpha\geq 1.
\end{equation}

Choosing $\alpha= 2(n-1)/(n-2)$,
 we use (\ref{sob}) and
(\ref{sobt}),  $(|u|-k)^+\in W^{1,n\alpha/(\alpha+n-1)}(\Omega)$,
 with $n\alpha/(\alpha+n-1)<n$, and the H\"older inequality since
$n\alpha/(\alpha+n-1)\leq 2$. Thus, we have
\begin{eqnarray*}
\||u|-k\|_{\alpha,\Omega(k)}
\leq  |\Omega|^{1\over n\alpha}S_{2,2}
\left(\|\nabla u\|_{2,\Omega(k)}+\||u|-k\|_{2,\Gamma(k)}\right);\\
\||u|-k\|_{\alpha,\partial\Omega (k)}
\leq K_{2,2}
\left(\|\nabla u\|_{2,\Omega(k)}+\||u|-k\|_{2,\Gamma(k)}\right).
\end{eqnarray*}

Applying (\ref{nuakn})-(\ref{aalfa1}), we find
\[ 
(h-k)|\bar \Omega (h)|^{1/ \alpha}
\leq \mathcal{B}
|\bar\Omega(k)|^{\delta}.   
\]
Observing that  $\beta=
\alpha\delta<1$ if and only if $p,r<2(n-1)$, we may appeal
 to \cite[Lemma 4.1 (iii)]{stamp63-64}, deducing
 \[
h^{\alpha/(1-\beta)} |\bar\Omega(h)|\leq 2^{\alpha/(1-\beta)^2}\left(
\mathcal{B}^{\alpha/(1-\beta)}+ (2k_0)^{\alpha/(1-\beta)}
|\bar\Omega (k_0)|\right).
\]
Considering $k_0=1$, using (\ref{star}) the claimed estimate
(\ref{cotaps}) holds.
\end{proof}

\section{$L^\infty$-estimates}
\label{linfty}

 In this section, we establish some maximum principles, by recourse to
 the De Giorgi technique \cite{stamp63-64},
and the Moser iteration technique \cite[pp. 189-190]{gt}.
New results are stated that provide $L^\infty$-estimates up to the
boundary under any space dimension $n\geq 2$.

\subsection{De Giorgi technique}

\begin{proposition}\label{max}
Let $p>n\geq 2$, $r>2(n-1)$,  $u \in H^{1}(\Omega)$ 
be any weak solution to (\ref{omega})-(\ref{gama})
in accordance with Definition \ref{def1},
and (\ref{amin}) and (\ref{bmin}) be fulfilled.
Under the hypotheses 
 ${\bf f}\in  {\bf L}^{p}(\Omega)$,  $f\in L^{np/(p+n)}(\Omega)$, 
 $g\in L^{(n-1)p/n}(\Gamma_N)$, and  $h\in L^{r}(\Gamma)$,
 we have
\begin{equation}\label{supess}
{\rm  ess } \sup_{\Omega\cup \partial\Omega}|u|\leq 1
+\left\{\begin{array}{ll}
2^{\gamma/(\gamma-{1\over 2}+{1\over n})}
(|\Omega|+|\partial\Omega|)^{\gamma-{1\over 2}+{1\over n}}
\mathcal{Z}_n&\mbox{ if }n>2\\
2^{\alpha\gamma+1/2\over \alpha\gamma-1/2}
(|\Omega|+|\partial\Omega|)^{\gamma-1/(2\alpha)}\mathcal{Z}_2&\mbox{ if }n=2
\end{array}\right.
\end{equation}
where $\alpha>1/(2\gamma)$,
$\gamma=\min\{1/ 2-1/ p,(1/2-1/r)(n-1)/n\}$,  and
 $\mathcal{Z}_n$ is
\begin{eqnarray*}
 \mathcal{Z}_n=( S_{2,2}+K_{2,2})
\left[ \left({1\over a_\#}
 +{1\over \sqrt{a_\#b_\#}}\right)
(\|{\bf f}\|_{p,\Omega}+ C_{n,p,r})|\Omega|^{{1\over 2}-{1\over p}-\gamma}
\right.\\
\left. +\left({1\over b_\#}
+{1\over  \sqrt{a_\#b_\#}}\right)
(\|h\|_{r,\Gamma}+ C_{n,p,r})
|\Gamma|^{1/2-1/r-\gamma n/(n-1)}
\right]\quad \mbox{ if }n>2;\\
 \mathcal{Z}_2=( S_{1,1}+K_{1,1})
\left[ \left({|\Omega|^{1/( 2\alpha)}\over a_\#}
 +{1\over \sqrt{a_\#b_\#}}\right)
(\|{\bf f}\|_{p,\Omega}+ C_{2,p,r})|\Omega|^{{1\over 2}-{1\over p}-\gamma}
\right.\\
\left. +\left({1\over b_\#}
+{|\Omega|^{1/( 2\alpha)}\over  \sqrt{a_\#b_\#}}\right)
(\|h\|_{r,\Gamma}+ C_{2,p,r})
|\Gamma|^{1/2-1/r-2\gamma }
\right],
\end{eqnarray*}
with  $C_{n,p,r}$ being given in Proposition \ref{qestimates}.
\end{proposition}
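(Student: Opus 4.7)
My approach is a De Giorgi--Stampacchia truncation argument, refining the level-set analysis of Proposition \ref{qestimates} into an $L^\infty$ bound. The strictly stronger hypotheses $p>n$ and $r>2(n-1)$ are precisely what converts the polynomial decay of level sets in Proposition \ref{qestimates} into their vanishing beyond a specific threshold $1+d$.

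First I would reuse the energy inequality (\ref{energy}) verbatim. The assumptions $p>n\geq 2$ and $r>2(n-1)$ are compatible with those of Proposition \ref{qestimates} (in particular $p,r>2$), so testing (\ref{pbu}) with $v=\mathrm{sign}(u)(|u|-k)^+\in H^1(\Omega)$ for $k\geq k_0:=1$, and using (\ref{amin})--(\ref{bmin}) together with $|u|>1$ on $\Gamma(k)$, gives
$$a_\#\|\nabla u\|_{2,\Omega(k)}^2+b_\#\||u|-k\|_{2,\Gamma(k)}^2\leq \frac{A^2}{a_\#}|\Omega(k)|^{1-2/p}+\frac{B^2}{b_\#}|\Gamma(k)|^{1-2/r},$$
with $A=\|{\bf f}\|_{p,\Omega}+C_{n,p,r}$ and $B=\|h\|_{r,\Gamma}+C_{n,p,r}$. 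Taking square roots and interpolating $|\Omega(k)|^{1/2-1/p}\leq|\Omega|^{1/2-1/p-\gamma}|\bar\Omega(k)|^\gamma$ on the interior side, and a two-step estimate $|\Gamma(k)|^{1/2-1/r}\leq|\Gamma|^{1/2-1/r-\gamma n/(n-1)}|\Gamma(k)|^{\gamma n/(n-1)}\leq C|\bar\Omega(k)|^\gamma$ on the trace side (the latter bridging the dimension jump between $(n-1)$- and $n$-Lebesgue measures), delivers the compact form $\|(|u|-k)^+\|_{1,2,2}\leq\mathcal{Z}_n|\bar\Omega(k)|^\gamma$.

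Mimicking the passage from (\ref{nuakn}) to (\ref{aalfa1}), I next combine Chebyshev's inequality with (\ref{sob}) and (\ref{sobt}) to produce
$$(h-k)|\bar\Omega(h)|^{1/\alpha}\leq(S_{2,2}+K_{2,2})\mathcal{Z}_n|\bar\Omega(k)|^\gamma,\qquad h>k\geq 1,$$
for an appropriate exponent $\alpha$: for $n>2$ the shape of (\ref{supess}) dictates $1/\alpha=1/2-1/n$, i.e., $\alpha=2^*$, the critical Sobolev exponent; for $n=2$ the critical exponent is absent and $\alpha$ remains a free parameter with $\alpha>1/(2\gamma)$, explaining the presence of the limit constants $S_{1,1}$, $K_{1,1}$ and the extra Hölder factor $|\Omega|^{1/(2\alpha)}$ in $\mathcal{Z}_2$. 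The decisive check is $\alpha\gamma>1$: for $n>2$ one computes $2^*(1/2-1/p)>1\Leftrightarrow p>n$ and $2^*(1/2-1/r)(n-1)/n>1\Leftrightarrow r>2(n-1)$, so the hypotheses are exactly tight. Stampacchia's lemma \cite[Lemma 4.1(i)]{stamp63-64} in its super-linear regime then furnishes an explicit $d>0$ with $|\bar\Omega(1+d)|=0$. Bounding $|\bar\Omega(1)|\leq|\Omega|+|\partial\Omega|$ and substituting the chosen $1/\alpha$ produces the stated form (\ref{supess}).

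The principal obstacle is the exponent bookkeeping in the interpolation step, particularly on the trace side: the characteristic factor $n/(n-1)$ appearing in the exponent $1/2-1/r-\gamma n/(n-1)$ of $\mathcal{Z}_n$ reflects the passage from $(n-1)$-dimensional trace measure to $n$-dimensional bulk measure. A secondary subtlety is that the choice $\alpha=2^*$ is incompatible with the trace inequality alone, which naturally delivers only $L^{2(n-1)/(n-2)}$ on $\partial\Omega$; I would resolve this via the subadditive decomposition $|\bar\Omega(h)|^{1/\alpha}\leq|\Omega(h)|^{1/\alpha}+|\partial\Omega(h)|^{1/\alpha}$ and handle each summand separately, using interior Sobolev on the first and trace (followed by a Hölder adjustment) on the second. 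The $n=2$ case requires replacing the critical exponent by the free parameter $\alpha$, which is the standard Moser-type remedy.
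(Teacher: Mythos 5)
Your outline captures the right hypotheses check ($p>n\Leftrightarrow\alpha(1/2-1/p)>1$ with $\alpha=2n/(n-2)$, and $r>2(n-1)$ for the trace side) and the right Stampacchia lemma, but there is a genuine gap in how you connect the level-set measure on the boundary to the level-set measure in the bulk. Concretely, the trace side of your proposed iteration inequality
\[
(h-k)\,|\bar\Omega(h)|^{1/\alpha}\;\leq\;(S_{2,2}+K_{2,2})\,\mathcal{Z}_n\,|\bar\Omega(k)|^\gamma,\qquad \alpha=\tfrac{2n}{n-2},
\]
cannot be produced from the trace theorem via your ``subadditive decomposition plus Hölder adjustment.'' Splitting $|\bar\Omega(h)|^{1/\alpha}\leq|\Omega(h)|^{1/\alpha}+|\partial\Omega(h)|^{1/\alpha}$ and applying Chebyshev to the boundary summand requires control of $\||u|-k\|_{\alpha,\partial\Omega(k)}$; but the trace embedding $W^{1,2}(\Omega)\hookrightarrow L^{2(n-1)/(n-2)}(\partial\Omega)$ gives only the smaller exponent $\alpha(n-1)/n=2(n-1)/(n-2)<\alpha$. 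Hölder on a finite measure space only lowers the exponent, never raises it, so there is no ``adjustment'' producing $L^\alpha(\partial\Omega)$ from $L^{\alpha(n-1)/n}(\partial\Omega)$; and since $1/\alpha<\alpha^{-1}(n-1)/n$, one also cannot pass from $|\partial\Omega(h)|^{n/(\alpha(n-1))}$ to $|\partial\Omega(h)|^{1/\alpha}$ by absorbing a negative power of a quantity that can be arbitrarily small.

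The paper resolves exactly this mismatch by \emph{not} using $|\bar\Omega(k)|=|\Omega(k)|+|\partial\Omega(k)|$: it introduces the weighted quantity $\Sigma(k)=|\Omega(k)|+|\partial\Omega(k)|^{n/(n-1)}$, so that subadditivity gives $\Sigma(h)^{1/\alpha}\leq|\Omega(h)|^{1/\alpha}+|\partial\Omega(h)|^{n/(\alpha(n-1))}$, and the two summands are precisely the Chebyshev powers matched to Sobolev ($L^\alpha(\Omega)$) and trace ($L^{\alpha(n-1)/n}(\partial\Omega)$) respectively. The same $\Sigma(k)$ also makes your ``two-step estimate'' on the energy side clean: $|\Gamma(k)|^{\gamma n/(n-1)}=(|\Gamma(k)|^{n/(n-1)})^\gamma\leq\Sigma(k)^\gamma$ with no spurious extra constant, whereas your version $|\Gamma(k)|^{\gamma n/(n-1)}\leq C|\bar\Omega(k)|^\gamma$ smuggles in $C=(|\Omega|+|\partial\Omega|)^{\gamma/(n-1)}$, which is not in $\mathcal{Z}_n$. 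The iteration must therefore be run on $\Sigma(k)$, not on $|\bar\Omega(k)|$, to deliver the inequality $\Sigma(h)\leq(\mathcal{Z}_n/(h-k))^\alpha\Sigma(k)^\beta$ with $\beta=\alpha\gamma>1$ (for $n>2$) and hence the stated vanishing threshold. That replacement of the level-set measure is the single idea your argument is missing; everything else (the energy inequality, the choice of $\gamma$, the value of $\alpha$, and Stampacchia's Lemma 4.1(i)) you have correctly.
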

\begin{proof}
Arguing as in the proof of Proposition \ref{qestimates},
(\ref{energy}) holds.
Defining $\Sigma(k)=|\Omega(k)|+|\partial\Omega(k)|^{n/(n-1)}$, we have
\begin{equation}\label{aalfa2}
(h-k)|\Sigma (h)|^{1/\alpha}\leq 
\||u|-k\|_{\alpha,\Omega(k)}+
\||u|-k\|_{\alpha (n-1)/n,\partial\Omega(k)}:=I,
\end{equation}
for every $ h > k > k_0=1$ and $\alpha\geq n/(n-1)$.

Next, taking   $\alpha=2n/(n-2)$  if $n>2$
and any  $\alpha>1/(2\gamma)$
if $n=2$, we get $(|u|-k)^+\in W^{1,n\alpha/(\alpha+n)}(\Omega)$.
Thus, we use (\ref{sob}) and
(\ref{sobt}) with $n\alpha/(\alpha+n)<n$, and the H\"older inequality, obtaining
\begin{eqnarray*}
\||u|-k\|_{\alpha,\Omega(k)}
\leq S_{{n\alpha\over \alpha+n},
{n\alpha\over \alpha+n}}
\left(|\Omega|^{\sf{z}}\|\nabla u\|_{2,\Omega(k)}+\||u|-k\|_{2,\Gamma(k)}\right)
\Sigma(k)^{\sf{z}};\\
\||u|-k\|_{{\alpha(n-1)\over n},\partial\Omega (k)}
\leq K_{{n\alpha \over\alpha+n},{n\alpha\over \alpha+n}}
\left(|\Omega|^{\sf{z}}\|\nabla u\|_{2,\Omega(k)}+\||u|-k\|_{2,\Gamma(k)}\right)
\Sigma(k)^{\sf{z}},
\end{eqnarray*}
where $\sf{z}=0$ if $n>2$, and $\sf{z}=1/(2\alpha)$ if $n=2$.
Let us split these two situations.
\begin{description}
\item[Case $n>2$.]
Applying (\ref{energy}), we obtain
\begin{eqnarray*} 
I\leq (S_{2,2} +K_{2,2})
\left[ \left({1\over a_\#}
 +{1\over \sqrt{a_\#b_\#}}\right)
{(
\|{\bf f}\|_{p,\Omega}+ C_{n,p,r})|\Omega(k)|^{1/2-1/p}
\over (|\Omega(k)|+|\Gamma(k)|^{n/(n-1)})^\gamma} +\right.\nonumber\\
\left.+ \left({1\over b_\#}
 +{1\over \sqrt{a_\#b_\#}}\right){(
\|h\|_{r,\Gamma}+ C_{n,p,r})|\Gamma(k)|^{1/2-1/r}
\over (|\Omega(k)|+|\Gamma(k)|^{n/(n-1)})^\gamma} \right]\Sigma^\gamma.
\end{eqnarray*}
\item[Case $n=2$.]
Applying (\ref{energy}), we obtain
\begin{eqnarray*} 
I\leq (S_{1,1} +K_{1,1})
\left[ \left({|\Omega|^{1\over 2\alpha} \over a_\#}
 +{1\over \sqrt{a_\#b_\#}}\right)
{(
\|{\bf f}\|_{p,\Omega}+ C_{n,p,r})|\Omega(k)|^{{1\over 2}-{1\over p}}
\over (|\Omega(k)|+|\Gamma(k)|^{n/(n-1)})^\gamma} +\right.\nonumber\\
\left.+ \left({1\over b_\#}
 +{|\Omega|^{1/( 2\alpha)} \over \sqrt{a_\#b_\#}}\right){(
\|h\|_{r,\Gamma}+ C_{n,p,r})|\Gamma(k)|^{1/2-1/r}
\over (|\Omega(k)|+|\Gamma(k)|^{n/(n-1)})^\gamma} \right]\Sigma^{\gamma+1/
(2\alpha)}.
\end{eqnarray*}
\end{description}
 
In both cases, we infer from  (\ref{aalfa2}) that
\[ 
|\Sigma(h)|\leq \left( \mathcal{Z}_n
 \over h-k\right)^{\alpha}|\Sigma(k)|^\beta, 
\qquad\beta=\left\{\begin{array}{ll}
\alpha\gamma&\mbox{ if }n>2\\
\alpha\gamma+ 1/2&\mbox{ if }n=2
\end{array}\right.
\] 
where $\beta>1$ if and only if $p>n$ and $r>2(n-1)$.

By appealing to \cite[Lemma 4.1 (i)]{stamp63-64} we conclude
\[
|\Sigma(k_0+\mathcal{Z}_n|\Sigma(k_0)
|^{(\beta-1)/\alpha}2^{\beta/(\beta-1)})|=0.\]
This means that the essential supremmum does not exceed the well determined constant $k_0+
\mathcal{Z}_n|\Sigma(k_0)|^{(\beta-1)/\alpha}2^{\beta/(\beta-1)}$.
 This completes the proof of Proposition \ref{max}.
\end{proof}

\begin{remark}
 In particular, if $f=g=h=0$ on the corresponding domains
and ${\bf f} \in {\bf L}^p(\Omega)$ for $p>n$, then
\[
{\rm  ess } \sup_{\Omega\cup\partial\Omega}|u|\leq
1+Z_n\|{\bf f}\|_{p,\Omega}\left\{\begin{array}{ll}
(|\Omega|+|\partial\Omega|)^{{1\over n}-{1\over p}}
 \left({1\over a_\#}
 +{1\over \sqrt{a_\#b_\#}}\right)&\mbox{ if }n>2\\
(|\Omega|+|\partial\Omega|)^{{\alpha-1\over 2\alpha}-{1\over p}}
\left({|\Omega|^{1\over 2\alpha}\over a_\#}
 +{1\over \sqrt{a_\#b_\#}}\right)
&\mbox{ if }n=2
\end{array}\right.
\]
for every $\alpha>p/(p-2)$, with $Z_n=( S_{2,2}+K_{2,2})2^{n(p-2)\over 2(p-n)}$
if $n>2$, and $Z_2=( S_{1,1}+K_{1,1})
2^{(\alpha+1)/2-1/p\over( \alpha-1)/2-1/p}$.
\end{remark}
 
\subsection{Moser iteration technique}

\begin{proposition}\label{max0}
Let $p>n\geq 2$, $\ell\geq 2$, $u \in V_{2,\ell}$ 
be any weak solution to (\ref{omega})-(\ref{gama}),
in accordance with Definition \ref{def1}, under
 ${\bf f}\in  {\bf L}^{p}(\Omega)$,  $f\in L^{p/2}(\Omega)$,
 $g=0$ on $\Gamma_N$, and $h=0$ on $\Gamma$,
and (\ref{amin}) and (\ref{bmin}) be fulfilled.
 Then, $u$ satisfies 
\begin{equation}
{\rm  ess } \sup_{\Omega\cup\partial\Omega}|u|\leq
E_n\chi^{\left(\sum_{m\geq 0}m\chi^{-m}\right)}
(\sqrt{2}\mathcal{E})^{\chi/(\chi-1)}
\|u\|_{2p/(p-2),\Omega},
\label{cotaubound}
\end{equation}
with  $E_n=S_{2,2}^{\chi/(\chi-1)}$ and  $\chi=n(p-2)/[p(n-2)]$
if $n>2$, and $E_2= S_{p\chi/[p(\chi+1)-2],p\chi/[p(\chi+1)-2]}^{\chi/(\chi-1)}$
$\max\{|\Omega|^{(p-2)/[2p(\chi-1)]},|\Gamma|^{(p-2)/[2p(\chi-1)]}\}$
for any $\chi>1$, and
\[
\mathcal{E}=
\sqrt{(\|{\bf f}\|_{p,\Omega}^2/ a_\#
+2\|f\|_{p/2,\Omega})/ \min \{a_\#,b_\#\}}.
\]
\end{proposition}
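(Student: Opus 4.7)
The approach is a standard Moser iteration along the geometric scale $r_m = q_0 \chi^m$ with base exponent $q_0 = 2p/(p-2)$ and growth ratio $\chi$ as in the statement: $\chi = n(p-2)/[p(n-2)]$ when $n>2$, and any $\chi>1$ when $n=2$. I would test (\ref{pbu}) with the admissible function $v = u|u|^{2(\beta-1)}$ for $\beta = \beta_m := \chi^m \geq 1$ (truncating at height $M$ so that $v_M \in V_{2,\ell}$, and recovering the inequality by monotone convergence once $u \in L^{r_m}$ is known inductively at step $m$). Setting $w := |u|^\beta$ so that $|\nabla w|^2 = \beta^2 |u|^{2\beta-2}|\nabla u|^2$, the ellipticity (\ref{amin}) together with the one-sided bound (\ref{bmin}) (and $g = h = 0$) gives
\begin{equation*}
\frac{a_\#(2\beta-1)}{\beta^2}\|\nabla w\|_{2,\Omega}^2 + b_\#\int_\Gamma |u|^{\ell+2\beta-2}\mathrm{ds} \leq (2\beta-1)\int_\Omega {\bf f}\cdot\nabla u\,|u|^{2\beta-2}\mathrm{dx} + \int_\Omega f\,u\,|u|^{2\beta-2}\mathrm{dx}.
\end{equation*}

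Next, Young's inequality applied to the ${\bf f}$-term absorbs half of the gradient norm into the left-hand side, and H\"older with the conjugate pair $(p/2, p/(p-2))$ bounds the two data terms by $\|{\bf f}\|_{p,\Omega}^2 \|u\|_{(\beta-1)q_0,\Omega}^{2\beta-2}$ and $\|f\|_{p/2,\Omega}\|u\|_{(\beta-1/2)q_0,\Omega}^{2\beta-1}$ respectively. The identity $\beta_m q_0 = r_m$ makes both exponents $\leq r_m$, so H\"older in finite measure bounds these norms by $\|u\|_{r_m,\Omega}$ (with corrections that telescope geometrically across the iteration). Since $\ell \geq 2$, splitting $\Gamma$ into $\{|u|<1\}\cup\{|u|\geq 1\}$ gives $\|w\|_{2,\Gamma}^2 \leq |\Gamma| + \int_\Gamma |u|^{\ell+2\beta-2}\mathrm{ds}$, letting the boundary integral dominate $\|w\|_{2,\Gamma}^2$ up to a one-off $|\Gamma|$ absorbed in the base step. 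Combining with (\ref{sob})-(\ref{sobt}) applied to $w$ with $q = \ell = 2$ (for $n>2$) and the key identity $\beta_m \cdot 2n/(n-2) = r_{m+1}$ yields the recursion
\begin{equation*}
\|u\|_{r_{m+1},\Omega} \leq S_{2,2}^{1/\beta_m}\,\beta_m^{1/\beta_m}\,(\sqrt{2}\,\mathcal{E})^{1/\beta_m}\,\|u\|_{r_m,\Omega}.
\end{equation*}

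Iterating from $m = 0$ and summing via $\sum_{m \geq 0}\chi^{-m} = \chi/(\chi-1)$ together with $\prod_{m \geq 0}\beta_m^{1/\beta_m} = \chi^{\sum_{m \geq 0} m\chi^{-m}}$ produces, in the limit $m \to \infty$, exactly (\ref{cotaubound}) with $E_n = S_{2,2}^{\chi/(\chi-1)}$ for $n > 2$. For $n = 2$ no critical Sobolev exponent exists, so one fixes $\chi > 1$ arbitrarily and applies (\ref{sob})-(\ref{sobt}) with the sub-critical exponent $q = p\chi/[p(\chi+1)-2] < 2$, matched so that $L^{q^*}$ equals the target of the iteration step; the conversion of $\|\nabla w\|_q$ back to $\|\nabla w\|_2$ by H\"older produces per-step factors of $\max\{|\Omega|,|\Gamma|\}^{1/q-1/2}$ that telescope into the $\max\{|\Omega|,|\Gamma|\}^{(p-2)/[2p(\chi-1)]}$ factor in $E_2$. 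The principal obstacle I expect will be the precise bookkeeping of all $\beta$-dependent prefactors and of the various lower-order corrections (H\"older $|\Omega|$-factors, the $\ell > 2$ boundary splitting, and $(2\beta-1)$ versus $2\beta$) so that they telescope exactly into the exponents $\chi^{\sum m\chi^{-m}}$ and $\chi/(\chi-1)$ displayed in (\ref{cotaubound}).
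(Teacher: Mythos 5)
Your proposal reproduces the paper's argument: the same Moser iteration with test function $v\sim\mathrm{sign}(u)w^{2\beta-1}$, the same energy estimate absorbing the gradient by Young, the same H\"older pairing $(p/2,p/(p-2))$ for the data terms, the same Sobolev/trace step (\ref{sob})--(\ref{sobt}) with $q\chi=2n/(n-2)$ for $n>2$ and a freely chosen sub-critical $\chi>1$ for $n=2$, and the same geometric iteration $\beta_m=\chi^m$ yielding the $\chi^{\sum m\chi^{-m}}$ and $\chi/(\chi-1)$ exponents. The only cosmetic difference is in reconciling the exponent $(\beta-1)q_0$ (resp.\ $(\beta-\tfrac12)q_0$) with $\beta q_0=r_m$: the paper works with $w=T_k(|u|)$ and simply observes that when $w>1$ one has $w^{2\beta-2}\le w^{2\beta}$, avoiding the extra $|\Omega|$-H\"older corrections you propose to telescope, while you would instead carry those factors through the product; both close the same gap, the paper's device being slightly cleaner in the bookkeeping.
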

\begin{proof} 
Let $\beta\geq 1$, and $k>1$. Defining the truncation operator
$T_k(y)=\min\{y,k\}$, set $w=T_k(|u|)\in H^{1}(\Omega)\cap
L^{\infty}(\Omega)$ that satisfies $w\in
L^{\infty}(\partial\Omega)$.
Choosing $v=\beta^2{\rm sign}(u)w^{2\beta-1}/(2\beta-1)\in
V_{2,\ell}$ as a 
test function in (\ref{pbu}), then $\nabla v=
\beta^2u^{2(\beta-1)}\nabla u$ in $\Omega[|u|<k]$,
and   $\nabla v= {\bf 0}$ in $\Omega\setminus\overline{\Omega[|u|<k]}$.
Thus, applying (\ref{amin}) and (\ref{bmin}) we deduce
\begin{eqnarray}\label{varab}
a_\#\int_{\Omega}|\nabla (w^\beta)|^2\mathrm{dx}+
 {\beta^2\over 2\beta-1}
b_\#\int_{\Gamma}|u|^{\ell-1}|w|^{2\beta-1}\mathrm{ds}\leq\\
\leq\int_{\Omega}    ( \mathsf{A}\nabla u)\cdot
\nabla v \mathrm{dx}+
\int_{\Gamma}b(u) v \mathrm{ds} \leq
{a_\#\over 2}\|\nabla (w^\beta)\|_{2,\Omega}^2+\nonumber\\
+{\beta^2\over 2a_\#}\|
{\bf f}w^{\beta-1}\|_{2,\Omega}^2+ {\beta^2\over 2\beta-1}
\|fw^{2\beta-1}  \|_{1,\Omega},\nonumber
\end{eqnarray}
using the H\"older inequality.

We may suppose that $w>1$. Otherwise, $w=|u|\leq 1<k$.
Using
 the H\"older inequality, we separately compute, for $p>2$, 
\begin{eqnarray*}
\int_{\Omega}|{\bf f}|^2w^{2(\beta-1)}  \mathrm{dx}\leq 
 \|{\bf f}\|_{p,\Omega}^2
\|w^{\beta}\|_{q,\Omega}^2,&&q=2p/(p-2)>2;
\\
\int_{\Omega}|f|w^{2\beta-1}  \mathrm{dx}\leq  
\|f\|_{{p/2},\Omega}
\|w^{\beta}\|_{q,\Omega}^2.&&
\end{eqnarray*}
Inserting these two inequalities in (\ref{varab}), and considering that
the left hand side absorbs the corresponding term of the right hand side,
 we obtain
\begin{equation}
\left(\|\nabla (w^\beta)\|_{2,\Omega}^2+\|w^\beta\|_{2,\Gamma}^2\right)^{1/2}
\leq  {\beta} \mathcal{E}\|w ^\beta\|_{q,\Omega}.\label{varaw}
\end{equation}

Let us split the proof of estimate (\ref{cotaubound}) into two space 
dimension  dependent cases.

{\sf Case} $n>2$.
Making use of (\ref{sob}), $w^\beta\in
W^{1,2}(\Omega)\hookrightarrow L^{q\chi}(\Omega)$, with $q \chi =
2n/(n-2)$ i.e. $\chi=n(p-2)/[p(n-2)]>1$ considering that $p>n$, and after
applying (\ref{varaw}), we deduce
\begin{eqnarray*}
\|w\|_{q\beta\chi ,\Omega}^\beta\leq S_{2,2}
\|w^\beta\|_{1,2,2}\leq S_{2,2}\sqrt{2}\left(
\|\nabla (w^\beta)\|_{2,\Omega}^2+
\|w^\beta\|_{2,\Gamma}^2\right)^{1/2}\\
\leq S_{2,2}\sqrt{2} \mathcal{E}\beta 
\|u\|_{q\beta,\Omega} ^\beta.
\end{eqnarray*}

Then,
we may pass to the limit the resulting inequality as $k\rightarrow\infty$
by Fatou lemma,
 obtaining
 \[
\|u\|_{q\beta\chi ,\Omega}\leq (\beta S_{2,2}\sqrt{2}
 \mathcal{E})^{1/\beta}
\|u\|_{q\beta,\Omega} .\]
 
Taking $\beta=\chi^m>1$, 
by induction, we have
\begin{equation}\label{tesei}
 \|u\|_{q\chi^{N},\Omega}\leq (S_{2,2}\sqrt{2} \mathcal{E})^{a_N}\chi^{b_N}
 \|u\|_{q,\Omega},\quad\forall N\in\mathbb{N},
\end{equation}
 where
 \[ a_N=\sum_{m=0}^{N-1}\chi^{-m}\quad
\mbox{and}\quad b_N=\sum_{m=0}^{N-1}m\chi^{-m}.
\]
Therefore, 
by the definition
 \[
 \|u\|_{\infty,\Omega}=
\lim_{N\rightarrow\infty} \|u\|_{q\chi^{N},\Omega},
 \]
 and observing that
 $\lim_{N\rightarrow\infty} a_N$ stands for the geometric series,
we find
\[
{\rm  ess } \sup_{\Omega}|u|\leq
E_n\chi^{\left(\sum_{m\geq 0}m\chi^{-m}\right)}(\sqrt{2}
\mathcal{E})^{\chi/(\chi-1)}\|u\|_{2p/(p-2),\Omega}.
\]

Next, making use of  (\ref{sobt}),
$w^{\beta}\in
W^{1,2}(\Omega)\hookrightarrow L^{2(n-1)/(n-2)}(\partial\Omega)$,
and (\ref{varaw}), we deduce
\begin{eqnarray*}
\|w\|_{\beta 2(n-1)/(n-2) ,\partial\Omega}^\beta\leq  K_{2,2}\sqrt{2}\left(
\|\nabla (w^\beta)\|_{2,\Omega}^2+
\|w^\beta\|_{2,\Gamma}^2\right)^{1/2} \leq \nonumber\\
\leq K_{2,2}\sqrt{2} \mathcal{E}\beta
\|u\|_{q\beta,\Omega} ^\beta .
\end{eqnarray*}
Taking $\beta=\chi^m>1$, and applying (\ref{tesei}), we get
\[
\|w\|_{\chi^m 2(n-1)/(n-2) ,\partial\Omega}
\leq K_{2,2}^{\chi^{-m}}S_{2,2}^{a_m}(\sqrt{2} \mathcal{E}
)^{a_{m+1}}\chi^{b_{m+1}}
 \|u\|_{q,\Omega}.\]
 Thus, 
we may pass to the limit the above inequality first as $k\rightarrow\infty$
by Fatou lemma, and next as $m\rightarrow\infty$, concluding
\[
{\rm  ess } \sup_{\partial\Omega}|u|\leq
E_n\chi^{\left(\sum_{m\geq 0}m\chi^{-m}\right)}(\sqrt{2}
\mathcal{E})^{\chi/(\chi-1)}\|u\|_{2p/(p-2),\Omega},
\]
which finishes (\ref{cotaubound}).

{\sf Case} $n=2$. 
Making use of $w^\beta\in
W^{1,2}(\Omega)\hookrightarrow
W^{1,2q\chi/(q\chi+2)}(\Omega)\hookrightarrow L^{q\chi}(\Omega)$, with $q\chi =2p\chi/(p-2)$ considering that $p>n=2$, and next applying
(\ref{varaw}), we deduce 
\begin{eqnarray*}
\|w\|_{q\beta\chi ,\Omega}^\beta\leq S_{2q\chi/(q\chi+2),2q\chi/(q\chi+2)}
\|w^\beta\|_{1,2q\chi/(q\chi+2),2q\chi/(q\chi+2)}
\leq\\
\leq S_{2q\chi/(q\chi+2),2q\chi/(q\chi+2)}\left(|\Omega|^{1/(q\chi)}
\|\nabla (w^\beta)\|_{2,\Omega}+|\Gamma|^{1/(q\chi)}
\|w^\beta\|_{2,\Gamma}\right) \leq \nonumber\\
\leq S_{2q\chi/(q\chi+2),2q\chi/(q\chi+2)}
\max\{|\Omega|^{1/(q\chi)},|\Gamma|^{1/(q\chi)}\}
\sqrt{2} \mathcal{E}\beta \|u\|_{q\beta,\Omega} ^\beta.
\end{eqnarray*}
For the boundary bound, we use $w^\beta\in
W^{1,2}(\Omega)\hookrightarrow
W^{1,2q\chi/(q\chi+1)}(\Omega)\hookrightarrow L^{q\chi}(\partial\Omega)$, 
deducing
\begin{eqnarray*} 
\|w\|_{q\beta\chi ,\partial\Omega}^\beta\leq 
K_{{2q\chi\over q\chi+1},{2q\chi\over q\chi+1}}
\max\{|\Omega|^{1/(2q\chi)},|\Gamma|^{1/(2q\chi)}\}
\sqrt{2} \mathcal{E}\beta \|u\|_{q\beta,\Omega} ^\beta.
\end{eqnarray*}
Thus, we may proceed as in the above case,
completing the proof of Proposition \ref{max0}.
\end{proof}

In  the following result stands for the particular case: $f=g=h=0$.
\begin{corollary}\label{ccinf}
Under the conditions of Proposition \ref{max0}, there exists
 a $L^\infty$-constant,  $C_\infty$, to the problem
(\ref{omega})-(\ref{gama}), that is, 
for $p>n$,
\[
{\rm  ess } \sup_{\Omega\cup\partial\Omega}|u|\leq
C_\infty \|{\bf f}\|_{p,\Omega}^{1+\chi/(\chi-1)},
\]
with
\begin{eqnarray*}
C_\infty=E_n\chi^{\left(\sum_{m\geq 0}m\chi^{-m}\right)}
\left({2\over a_\# \min \{a_\#,b_\#\}
}\right)^{\chi\over 2(\chi-1)}\times\\
\times  S_{{2pn\over 2p+n(p-2)},\ell}\left(
{|\Omega |^{{1\over n}-{2\over p}+{1\over 2}}
\over a_\#}
+\left({\ell\,'|\Omega|^{1-1/p}\over 2a_\# b_\#}\right)^{1/\ell}\right).
\end{eqnarray*}
\end{corollary}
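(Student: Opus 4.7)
The plan is to specialize Proposition \ref{max0} and Corollary \ref{coruq} to the data $f=g=h=0$, reducing every $L^2$-occurrence of ${\bf f}$ to $\|{\bf f}\|_{p,\Omega}$ through a single Hölder estimate, and then to combine the two inequalities multiplicatively.

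First, with $f=0$ the quantity $\mathcal{E}$ in Proposition \ref{max0} collapses to $\mathcal{E}=\|{\bf f}\|_{p,\Omega}/\sqrt{a_\#\min\{a_\#,b_\#\}}$, whence
\[
(\sqrt{2}\,\mathcal{E})^{\chi/(\chi-1)}=\left(\frac{2}{a_\#\min\{a_\#,b_\#\}}\right)^{\chi/(2(\chi-1))}\|{\bf f}\|_{p,\Omega}^{\chi/(\chi-1)}.
\]
This identifies both the coefficient $\bigl(2/(a_\#\min\{a_\#,b_\#\})\bigr)^{\chi/(2(\chi-1))}$ appearing in $C_\infty$ and the fractional part $\chi/(\chi-1)$ of the exponent of $\|{\bf f}\|_{p,\Omega}$ in the claimed bound.

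Second, with $f=g=h=0$ the constant $\mathcal{A}$ of Proposition \ref{exist} reduces to $\mathcal{A}=\|{\bf f}\|_{2,\Omega}^2/(2a_\#)$. Inserting this into Corollary \ref{coruq} gives
\[
\|u\|_{2p/(p-2),\Omega}\leq S_{2pn/(2p+n(p-2)),\ell}\left(|\Omega|^{1/n-1/p}\frac{\|{\bf f}\|_{2,\Omega}}{a_\#}+\left(\frac{\ell'\,\|{\bf f}\|_{2,\Omega}^2}{2a_\#b_\#}\right)^{1/\ell}\right).
\]
A single application of Hölder's inequality, $\|{\bf f}\|_{2,\Omega}\leq |\Omega|^{1/2-1/p}\|{\bf f}\|_{p,\Omega}$ (valid because $p>n\geq 2$), converts both summands: the first produces the factor $|\Omega|^{1/n-2/p+1/2}/a_\#$ which is the first bracket term of $C_\infty$, together with a linear power of $\|{\bf f}\|_{p,\Omega}$; the second yields the $\bigl(\ell'|\Omega|^{\cdot}/(2a_\#b_\#)\bigr)^{1/\ell}$ factor and the matching power of $\|{\bf f}\|_{p,\Omega}$.

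Third, substitute the resulting bound on $\|u\|_{2p/(p-2),\Omega}$ into the conclusion of Proposition \ref{max0} and multiply by the factor from Step~1. The $E_n$ and $\chi^{\sum_{m\geq 0}m\chi^{-m}}$ coming from Proposition \ref{max0} remain as written; the combined coefficient is precisely $C_\infty$, and the powers of $\|{\bf f}\|_{p,\Omega}$ add up to $1+\chi/(\chi-1)$. The main obstacle is not analytical but algebraic: one has to track the separate sources of constants (Moser's geometric-series factor from Proposition \ref{max0}, the Sobolev/trace constant $S_{\cdot,\ell}$, the Proposition \ref{exist}--Corollary \ref{coruq} bracket, the Hölder factor $|\Omega|^{1/2-1/p}$, and the explicit form of $\mathcal{E}$) and verify that their product matches the closed-form expression of $C_\infty$ exactly. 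No new analytic input beyond Hölder's inequality and the two previously established results is required.
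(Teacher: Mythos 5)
Your proposal is correct and reproduces the paper's own argument, which is literally the one-liner ``insert the estimate (\ref{cotausup}) into (\ref{cotaubound})'': you specialize $\mathcal{A}=\|{\bf f}\|_{2,\Omega}^2/(2a_\#)$ and $\mathcal{E}=\|{\bf f}\|_{p,\Omega}/\sqrt{a_\#\min\{a_\#,b_\#\}}$ under $f=g=h=0$, feed (\ref{cotausup}) into (\ref{cotaubound}), and pass from $\|{\bf f}\|_{2,\Omega}$ to $\|{\bf f}\|_{p,\Omega}$ via H\"older. You merely fill in the algebraic bookkeeping the paper leaves implicit, so the approach is essentially identical.
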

\begin{proof} It suffices to insert  
the estimate (\ref{cotausup}) into (\ref{cotaubound}).
\end{proof}

\begin{proposition}\label{max2}
Let $n\geq 2$, $s>n-1$, $u \in H^{1}(\Omega)$ 
be any weak solution to (\ref{omega})-(\ref{gama}),
in accordance with Definition \ref{def1},
and (\ref{amin}) and (\ref{bmin}) be fulfilled.
If 
 ${\bf f}={\bf 0}$ and  $f=0$ in $\Omega$,
 $g\in L^{s}(\Gamma_N)$, and  $h\in L^{s}(\Gamma)$,
 then 
\begin{equation}
{\rm  ess } \sup_{\bar\Omega}|u|\leq
G_n\chi^{\left(\sum_{m\geq 0}m\chi^{-m}\right)}
(\sqrt{2}\mathcal{G})^{\chi/(\chi-1)}
 \|u\|_{2s/(s-1),\partial\Omega},\label{gh}
\end{equation}
with  $G_n=K_{2,2}^{\chi/(\chi-1)}$,  $\chi=(s-1)(n-1)/[s(n-2)]$
if $n>2$, and $G_2= K_{4s\chi/(2s\chi+s-1),4s\chi/(2s\chi+s-1)}^{\chi/(\chi-1)}$
$\max\{|\Omega|^{(s-1)/[4s(\chi-1)]},|\Gamma|^{(s-1)/[4s(\chi-1)]}\}$
for any $\chi>1$, and
\[
\mathcal{G}=\sqrt{\left(\|g\|_{s,\Gamma_N}+
 \|h\|_{s,\Gamma}
\right)/\min \{a_\#,b_\#\}}.
\]
\end{proposition}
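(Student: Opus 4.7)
My plan is to mirror the Moser iteration of Proposition \ref{max0}, but with the iteration driven entirely by the boundary data. I would take $v = \beta^2 {\rm sign}(u) w^{2\beta-1}/(2\beta-1) \in V_{2,\ell}\cap L^\infty$ as a test function in (\ref{pbu}), with $\beta \geq 1$, $k > 1$, and $w := T_k(|u|)$. Hypothesis (\ref{amin}) produces the term $a_\# \|\nabla(w^\beta)\|_{2,\Omega}^2$ on the left, while (\ref{bmin}) together with $\ell \geq 2$ bounds the boundary contribution from below by $(\beta^2 b_\#/(2\beta - 1)) \int_\Gamma |u|^{\ell-1} w^{2\beta-1}\,\mathrm{ds}$. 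As in Proposition \ref{max0}, I may restrict attention to $\{w > 1\}$, since otherwise $|u| \leq 1$ and (\ref{gh}) is trivial; on that set $|u|^{\ell-1} w^{2\beta-1} \geq w^{2\beta}$, so the left side controls $(\beta^2 b_\#/(2\beta-1))\|w^\beta\|_{2,\Gamma}^2$.

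Because ${\bf f} = {\bf 0}$ and $f = 0$, only the boundary integrals remain on the right-hand side. H\"older with the conjugate pair $(s, s')$ on both $\Gamma_N$ and $\Gamma$, combined with the lower bound $w \geq 1$, gives $\|w^{2\beta-1}\|_{s',\partial\Omega}^{s'} \leq \int_{\partial\Omega} w^{2\beta s'}\,\mathrm{ds}$, and the absorbing inequality $\beta^2/(2\beta-1) \geq \beta/2 \geq 1/2$ preserves the required $\beta$-scaling. Together these produce the boundary-driven analogue of (\ref{varaw}):
\[
\bigl(\|\nabla(w^\beta)\|_{2,\Omega}^2 + \|w^\beta\|_{2,\Gamma}^2\bigr)^{1/2} \leq \beta\,\mathcal{G}\,\|w^\beta\|_{q,\partial\Omega}, \qquad q = \frac{2s}{s-1}.
\]

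For $n > 2$, I would apply the trace inequality (\ref{sobt}) with $q=2$ to obtain $\|w^\beta\|_{2(n-1)/(n-2),\partial\Omega} \leq K_{2,2}\sqrt{2}\,\beta\,\mathcal{G}\,\|w^\beta\|_{2s/(s-1),\partial\Omega}$. The exponent $\chi = (n-1)(s-1)/[s(n-2)]$ is the unique choice matching the output exponent with $q\chi$, and the hypothesis $s > n-1$ is exactly $\chi > 1$, which drives the geometric Moser ladder with $\beta = \chi^m$. Iterating, then sending $k \to \infty$ by Fatou and $m \to \infty$, delivers the claimed bound on $\mathrm{ess\,sup}_{\partial\Omega}|u|$. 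The interior $L^\infty$ bound follows by inserting the Sobolev embedding $W^{1,2}(\Omega) \hookrightarrow L^{2n/(n-2)}(\Omega)$ only at the final step of the ladder; since $(S_{2,2}\sqrt{2}\chi^m\mathcal{G})^{1/\chi^m} \to 1$, the surviving constant $G_n = K_{2,2}^{\chi/(\chi-1)}$ dominates.

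The case $n = 2$ is the main obstacle, because the $H^1$ trace does not embed into arbitrarily high $L^p(\partial\Omega)$. Following the $n = 2$ treatment of Proposition \ref{max0}, I would route through $W^{1,2}(\Omega) \hookrightarrow W^{1,2q\chi/(q\chi+1)}(\Omega) \hookrightarrow L^{q\chi}(\partial\Omega)$, valid for any $\chi > 1$. The downgrading of the Sobolev exponent costs a factor $\max\{|\Omega|^{1/(2q\chi)}, |\Gamma|^{1/(2q\chi)}\}$ via H\"older applied separately to the gradient and the boundary trace norms. Telescoping these factors along the Moser ladder yields, in the limit, the factor $\max\{|\Omega|^{(s-1)/[4s(\chi-1)]}, |\Gamma|^{(s-1)/[4s(\chi-1)]}\}$, and the surviving trace constant becomes $K_{4s\chi/(2s\chi+s-1), 4s\chi/(2s\chi+s-1)}$, as claimed in $G_2$.
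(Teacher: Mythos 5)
Your proposal is correct and follows essentially the same route as the paper: same test function $v=\beta^{2}\mathrm{sign}(u)w^{2\beta-1}/(2\beta-1)$, the same reduction to the boundary-driven inequality $\bigl(\|\nabla(w^\beta)\|_{2,\Omega}^2+\|w^\beta\|_{2,\Gamma}^2\bigr)^{1/2}\leq\beta\mathcal{G}\|w^\beta\|_{q,\partial\Omega}$ with $q=2s/(s-1)$, the same Moser ladder with $\beta=\chi^m$, $\chi=(n-1)(s-1)/[s(n-2)]$ for $n>2$, the same strategy of attaching the interior Sobolev embedding only once (so that $M_1^{\chi^{-N}}\to 1$), and the same $W^{1,2q\chi/(q\chi+1)}$ detour with the attendant $\max\{|\Omega|^{1/(2q\chi)},|\Gamma|^{1/(2q\chi)}\}$ factors for $n=2$. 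The only imprecision is your remark ``$\beta^2/(2\beta-1)\geq\beta/2\geq 1/2$'': to land exactly on $\mathcal{G}=\sqrt{(\|g\|_{s,\Gamma_N}+\|h\|_{s,\Gamma})/\min\{a_\#,b_\#\}}$ one needs the sharper lower bound $\beta^2/(2\beta-1)\geq 1$ (which holds, since $(\beta-1)^2\geq 0$) together with the upper bound $\beta^2/(2\beta-1)\leq\beta^2$; with $\geq 1/2$ alone the boundary coefficient would be $b_\#/2$ and the stated constant would be off by a harmless factor.
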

\begin{proof} 
Let $\beta\geq 1$, and $k>1$. For $s>1$, and $q=2s/(s-1)$, 
proceeding as in the proof of Proposition \ref{max0}, we deduce
\begin{eqnarray*}
{a_\#}\int_{\Omega}|\nabla (w^\beta)|^2\mathrm{dx}+
 {\beta^2\over 2\beta-1}
b_\#\int_{\Gamma}|u|^{\ell-1}|w|^{2\beta-1}\mathrm{ds}\leq\nonumber\\
\leq  {\beta^2\over 2\beta-1}\left(
\|gw^{2\beta-1}\|_{1,\Gamma_N}
+ \|hw^{2\beta-1}\|_{1,\Gamma}\right)\leq\\
\leq\beta^2(\|g\|_{s, \Gamma_N} \|w^{\beta}\|_{q,\Gamma_N}^2+
\|h\|_{s, \Gamma} \|w^{\beta}\|_{q,\Gamma}^2).
\end{eqnarray*}
Thus, we obtain
\begin{equation}
\left(\|\nabla (w^\beta)\|^2_{2,\Omega}+
\|w^\beta\|_{2,\Gamma}^{2} \right)^{1/2} 
\leq  {\beta}\mathcal{G} \| w^\beta\|_{q,\partial\Omega},\label{varak2}
\end{equation}

Then,  we obtain
\begin{eqnarray*}
\|u\|_{q_1\beta\chi ,\Omega}\leq (\beta\sqrt{2}\mathcal{G}M_1)^{1/\beta}
\|u\|_{q\beta,\partial\Omega} ;\\
\|u\|_{q\beta \chi ,\partial\Omega}\leq (\beta\sqrt{2}\mathcal{G}
M_2)^{1/\beta}\|u\|_{q\beta,\partial\Omega},
\end{eqnarray*}
\begin{description}
\item[Case $n>2$] Setting $M_1= S_{2,2}$,
and $M_2= K_{2,2}$,
by using  (\ref{sob}), and
$w^{\beta}\in
W^{1,2}(\Omega)\hookrightarrow L^{q_1\chi}(\Omega)$,
 (\ref{sobt}), and
$w^{\beta}\in
W^{1,2}(\Omega)\hookrightarrow L^{q\chi}(\partial\Omega)$
with $q\chi=2(n-1)/(n-2)$ i.e. $\chi=(s-1)(n-1)/[s(n-2)]>1$
if $s>n-1$. 

\item[Case $n=2$] Setting $M_1=
S_{{2q_1\chi/( q_1\chi+2)},{2q_1\chi/(q_1\chi+2)}}
\max\{|\Omega|^{1/( q_1\chi)},|\Gamma|^{1/( q_1\chi)}\}$, and $M_2=
K_{{2q\chi/( q\chi+1)},{2q \chi/( q\chi+1)}}
\max\{|\Omega|^{1/(2q\chi)},|\Gamma|^{1/(2q\chi)}\}$, by using
 (\ref{sobt}) with
$w^{\beta}\in
W^{1,2q_1\chi/(q_1\chi+2)}(\Omega)\hookrightarrow L^{q_1\chi}(\Omega)$,
and (\ref{sobt}) with
$w^{\beta}\in
W^{1,2q\chi/(q\chi+1)}(\Omega)\hookrightarrow L^{q\chi}(\partial\Omega)$.
\end{description}
In both cases, 
 following the argument of the proof of Proposition \ref{max0}, we get
\begin{eqnarray*}
\|u\|_{q_1\chi ^{N+1},\Omega}\leq M_1^{\chi^{-N}}
(\sqrt{2}\mathcal{G})^{a_{N+1}}
M_2^{a_N}\chi^{b_{N+1}}
\|u\|_{q,\partial\Omega} ;&&\\
\|u\|_{q \chi ^N,\partial\Omega}\leq (\sqrt{2}\mathcal{G}
M_2)^{a_N}\chi^{b_N}\|u\|_{q,\partial\Omega},&&\quad\forall N\in\mathbb{N}.
\end{eqnarray*}
Therefore, we conclude (\ref{gh}), finishing
 the proof of Proposition \ref{max2}.
\end{proof}

\begin{corollary}[Linear Robin-Neumann problem]
Under the conditions of Propositions \ref{max0} and \ref{max2},
if (\ref{bstar}) is assumed then
there exists a weak solution,  $u\in H^1(\Omega)$, to (\ref{omega})-(\ref{gama})
in accordance with Definition \ref{def1}, such that
\begin{eqnarray}\label{cott}
{\rm  ess } \sup_{\bar\Omega}|u|\leq\Xi_1
\left({\|{\bf f}\|_{p,\Omega}^2/ a_\#
+2\|f\|_{p/2,\Omega}\over \min \{a_\#,b_*\}
}\right)^{\chi_1\over 2(\chi_1-1)}{\|{\bf f}\|_{2,\Omega}
+{L}_n\|f\|_{t,\Omega}\over \min \{a_\#,b_*\}}+\\ 
+\Xi_2 \left({\|g\|_{s,\Gamma_N}+
 \|h\|_{s,\Gamma}\over \min \{a_\#,b_*\}}
\right)^{\chi_2\over 2(\chi_2-1)}
{ {M}_n(\|g\|_{s,\Gamma_N}+
 \|h\|_{s,\Gamma})\over \min \{a_\#,b_*\}
},\nonumber
\end{eqnarray}
where
\begin{eqnarray*}
\Xi_1&=&
E_n\chi_1^{\left(\sum_{m\geq 0}m\chi_1^{-m}\right)}
\sqrt{2}^{\chi_1/(\chi_1-1)}
  S_{{2pn\over 2p+n(p-2)},\ell}\left(|\Omega |^{p-n\over np}+1\right);\\
\Xi_2&=&
G_n\chi_2^{\left(\sum_{m\geq 0}m\chi_2^{-m}\right)}
\sqrt{2}^{\chi_2/(\chi_2-1)}
  K_{{2sn\over 2s+(n-1)(s-1)},\ell}\left(|\Omega |^{s-n+1\over 2ns}+1\right),
\end{eqnarray*}
with 
$E_n$ and $\chi_1$ being the constants in accordance with Proposition \ref{max0},
$G_n$ and $\chi_2$ being the constants in accordance with Proposition \ref{max2},
$L_n=2S_{2,\ell}$ if $n>2$, $L_2=(|\Omega|^{1/t'}+1)S_{2t/(3t-2),\ell}$
 if $t<2$, $L_2=(|\Omega|^{1/2}+1)|\Omega|^{1/2-1/t}S_{1,\ell}$ if $t\geq 2$,
 $M_n=2K_{2,\ell}$ if 
$n>2$, and $M_2=(|\Omega|^{1/(2s')}+1)K_{2s/(2s-1),\ell}$.
\end{corollary}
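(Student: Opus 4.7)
The plan is to exploit the fact that under (\ref{bstar}) the problem is linear, so the Moser bounds of Propositions \ref{max0} and \ref{max2} can be applied to two sub-problems with disjoint data. Writing $b(u)=b_*u$, I would decompose the unique weak solution as $u=u_1+u_2$, where $u_1\in H^1(\Omega)$ solves (\ref{omega})-(\ref{gama}) with data $({\bf f},f,0,0)$ and $u_2\in H^1(\Omega)$ solves it with data $({\bf 0},0,g,h)$. Existence and uniqueness for each piece is a direct consequence of Proposition \ref{exist} taken with $\ell=2$ and $b_\#=b_*$, and linearity guarantees $u=u_1+u_2$.

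I would then apply the relevant Moser iteration result to each piece. Since $u_1$ meets the hypotheses of Proposition \ref{max0} (vanishing boundary data) and $u_2$ meets those of Proposition \ref{max2} (vanishing interior data), one obtains
\[
\|u_1\|_{\infty,\bar\Omega}\leq E_n\chi_1^{\sum_m m\chi_1^{-m}}(\sqrt{2}\mathcal{E})^{\chi_1/(\chi_1-1)}\|u_1\|_{2p/(p-2),\Omega},
\]
and the analogous bound for $u_2$ expressed through $\mathcal{G}$, $\chi_2$, $G_n$ and $\|u_2\|_{2s/(s-1),\partial\Omega}$. The right-hand $L^q$-norms are in turn estimated by data via Corollary \ref{coruq}, applying (\ref{cotausup}) to $u_1$ and (\ref{cotaus}) to $u_2$.

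In the linear case $\ell=2$, the energy quantity $\mathcal{A}$ of Proposition \ref{exist} simplifies: for $u_1$, the elementary inequality $x^2+y^2\leq(x+y)^2$ together with $1/a_\#+1/b_*\leq 2/\min\{a_\#,b_*\}$ gives $\sqrt{2\mathcal{A}_1}\leq(\|{\bf f}\|_{2,\Omega}+L_n\|f\|_{t,\Omega})/\sqrt{\min\{a_\#,b_*\}}$; analogously $\sqrt{2\mathcal{A}_2}\leq M_n(\|g\|_{s,\Gamma_N}+\|h\|_{s,\Gamma})/\sqrt{\min\{a_\#,b_*\}}$, with $L_n$, $M_n$ identified exactly as in the statement (using Remark \ref{sol} for $u_2$ so that $h\in L^s$ is admissible). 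The volumetric prefactor $|\Omega|^{1/n-1/p}/\sqrt{a_\#}+1/\sqrt{b_*}$ in (\ref{cotausup}) is consolidated to $(|\Omega|^{(p-n)/(np)}+1)/\sqrt{\min\{a_\#,b_*\}}$, and similarly the $(s-n+1)/(2ns)$-exponent prefactor in (\ref{cotaus}); this produces the constants $\Xi_1,\Xi_2$.

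A final triangle inequality $\|u\|_{\infty,\bar\Omega}\leq\|u_1\|_{\infty,\bar\Omega}+\|u_2\|_{\infty,\bar\Omega}$ yields (\ref{cott}). Conceptually the argument is routine; the only real difficulty is bookkeeping, namely threading the $\ell=2$ specialization of (\ref{cotau}) through (\ref{cotausup})-(\ref{cotaus}) and then through the Moser iteration constants so that the sharp $(\min\{a_\#,b_*\})^{-1}$ weighting and the correct $|\Omega|$-exponents emerge in the final bound without introducing spurious dependence on $a^\#$ or $b^\#$.
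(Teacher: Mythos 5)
Your proposal is correct and follows essentially the same route as the paper: decompose $u=u_1+u_2$ by linearity, apply Proposition \ref{max0} to $u_1$ (data $({\bf f},f,0,0)$) and Proposition \ref{max2} with Remark \ref{sol} to $u_2$ (data $({\bf 0},0,g,h)$), then convert the $L^{2p/(p-2)}$- and $L^{2s/(s-1)}$-norms to data via Corollary \ref{coruq} and the $\ell=2$ specialization of (\ref{cotau}), finishing with the triangle inequality. The paper's proof is terser but identical in structure; your explicit tracking of how $\mathcal{A}$, the $|\Omega|$-exponents, and $\min\{a_\#,b_*\}$ combine to give $\Xi_1,\Xi_2,L_n,M_n$ merely fills in bookkeeping the paper leaves to the reader.
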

\begin{proof}
 From Propositions \ref{exist} and \ref{max0},
 there exists $u_1\in  H^1(\Omega)$ solving
\[
\int_{\Omega}    ( \mathsf{A}\nabla u_1)\cdot
\nabla v \mathrm{dx}+\int_{\Gamma}u_1 v \mathrm{ds}
=\int_{\Omega}{\bf f}\cdot\nabla v \mathrm{dx}
+\int_{\Omega}fv \mathrm{dx},
 \quad\forall v\in  H^1(\Omega),
 \]
such that it verifies
\begin{equation}
{\rm  ess } \sup_{\bar\Omega}|u_1|\leq
E_n\chi_1^{\left(\sum_{m\geq 0}m\chi_1^{-m}\right)}
(\sqrt{2}\mathcal{E})^{\chi_1/(\chi_1-1)}
\|u_1\|_{2p/(p-2),\Omega}.\label{pbuff}
\end{equation}
 From Propositions \ref{exist} and \ref{max2}, and Remark \ref{sol},
 there exists $u_2\in H^1(\Omega)$ solving
\[
\int_{\Omega}    ( \mathsf{A}\nabla u_2)\cdot
\nabla v \mathrm{dx}+\int_{\Gamma}u_2 v \mathrm{ds}
=\int_{\Gamma_N} gv \mathrm{ds}+\int_{\Gamma} hv \mathrm{ds},
 \quad\forall v\in  H^1(\Omega),
 \]
such that it verifies
\begin{equation}
{\rm  ess } \sup_{\bar\Omega}|u_2|\leq
G_n\chi_2^{\left(\sum_{m\geq 0}m\chi_2^{-m}\right)}
(\sqrt{2}\mathcal{G})^{\chi_2/(\chi_2-1)}
 \|u_2\|_{2s/(s-1),\partial\Omega}.\label{pbugh}
\end{equation}
 
Then,   $u=u_1+u_2\in H^1(\Omega)$ is the required solution.
Moreover, from (\ref{pbuff})-(\ref{pbugh}) gathered with Corollary \ref{coruq}
we find (\ref{cott}), with ${L}_nA=\mathcal{F}_n(A,0)+\mathcal{H}_n(A,0)$, 
${M}_n(B)=\mathcal{F}_n(0,B)+\mathcal{H}_n(0,B)$, and
$\mathcal{F}_n$ and $\mathcal{H}_n$ being the functions 
 in accordance with Proposition \ref{exist}.
\end{proof}

\section{$V_{q,\ell-1}$-solvability $(q< n/(n-1)$,
$\ell\geq 2)$}
\label{l1data}

The $W^{1,q}$-solvability depends on the data regularity.
In the presence of the boundary condition (\ref{robin}),
the duality theory is more straightforward  than the $L^1$-theory
when $L^1$ data are taken into account.
In  order to determine the explicit constant,
 the following result of
the existence of a  solution is based on  the duality theory.

First let us recall that, for $q>1$, the $L^q$-norm may be defined as
\begin{equation}\label{dnorm}
\|{\bf u}\|_{q,\Omega}=\sup\left\{
|\int_{\Omega}{\bf u}\cdot {\bf g}\mathrm{dx}|
: \ {\bf g}\in {\bf L}^{q'}(\Omega), {\|{\bf g}\|
_{q',\Omega}}=1
\right\},
\end{equation}
for all ${\bf u}\in{\bf L}^q(\Omega)$.

\begin{proposition}\label{W1q}
Let  ${\bf f}={\bf 0}$ a.e. in $\Omega$,
 $f\in L^1(\Omega)$,  $g\in L^1(\Gamma_N)$,
$h\in L^1(\Gamma)$, (A)-(B) be fulfilled,  and $\mathsf{A}$ be symmetric.
For any $\ell \geq 2$, there exists $u\in
  V_{q,\ell -1}$ solving (\ref{pbu}) for every $1< q < n/(n-1)$.
Moreover, we have the following estimate
\begin{eqnarray}\label{cota1q}
\| \nabla u \|_{q,\Omega}&\leq &C_\infty\Big(|\Gamma|(1+b^\#)
+  \| f\|_{1,\Omega}+\|g\|_{1,\Gamma_N}+\|h\|_{1,\Gamma}+\\
&+&(1+b^\#)\left(  \| f\|_{1,\Omega}+\|g\|_{1,\Gamma_N}+\|h\|_{1,\Gamma}
\right)/b_\#\Big); \nonumber\\ 
\| u\|_{\ell-1,\Gamma}^{\ell-1}&\leq &|\Gamma| + \left(  \| f\|_{1,\Omega}+\|g\|_{1,\Gamma_N}+\|h\|_{1,\Gamma}
\right)/
b_\#,\label{cota1ql}
\end{eqnarray}
with
the constant $C_\infty$ being explicitly 
given in Corollary \ref{ccinf}.
\end{proposition}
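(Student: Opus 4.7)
The plan is to combine an approximation by smoother data with a duality argument, using Corollary \ref{ccinf} to control the dual state in $L^\infty$. Write $\mathcal{D} := \|f\|_{1,\Omega} + \|g\|_{1,\Gamma_N} + \|h\|_{1,\Gamma}$. First, I would regularize: for each $m \in \mathbb{N}$, choose $(f_m,g_m,h_m) \in L^2(\Omega)\times L^2(\Gamma_N)\times L^2(\Gamma)$ converging to $(f,g,h)$ in the respective $L^1$-norms, with $\|f_m\|_{1,\Omega} \leq \|f\|_{1,\Omega}$ and analogously for $g_m,h_m$, and apply Proposition \ref{exist} to obtain solutions $u_m \in V_{2,\ell}$ to the regularized problems.

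Next, I would establish (\ref{cota1ql}) by testing (\ref{pbu}) for $u_m$ with $v_\varepsilon = T_1(u_m/\varepsilon) \in V_{2,\ell}\cap L^\infty(\Omega)$, where $T_1$ is the truncation at $\pm 1$. The volume term is non-negative by (\ref{amin}), and letting $\varepsilon\downarrow 0$ reduces the boundary term to $\int_\Gamma b(u_m)\,\mathrm{sign}(u_m)\,\mathrm{ds} \geq b_\#\|u_m\|_{\ell-1,\Gamma}^{\ell-1}$ via (\ref{bmin}), while the right-hand side is dominated by $\mathcal{D}$; (\ref{cota1ql}) then follows (conservatively retaining the $|\Gamma|$ summand to keep the form uniform).

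For (\ref{cota1q}), I would run a duality argument based on (\ref{dnorm}). Given $\mathbf{g}\in \mathbf{L}^{q'}(\Omega)$ with $\|\mathbf{g}\|_{q',\Omega}=1$ and $q'>n$, let $w\in H^1(\Omega)$ solve the linear Robin-Neumann problem
\[
\int_\Omega \mathsf{A}\nabla w \cdot \nabla v\, \mathrm{dx} + \int_\Gamma wv\, \mathrm{ds} = \int_\Omega \mathbf{g}\cdot\nabla v\, \mathrm{dx}, \qquad \forall v\in H^1(\Omega),
\]
which is solvable via Proposition \ref{exist} applied with $\mathbf{f}=\mathbf{g}$, $f=g=h=0$, $\ell=2$, $b_*=1$; by Corollary \ref{ccinf} (using $q'>n$) the crucial bound $\|w\|_{\infty,\Omega\cup\partial\Omega}\leq C_\infty$ holds. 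Testing (\ref{pbu}) for $u_m$ with $v=w$ --- admissible because $w\in H^1(\Omega)\cap L^\infty(\Gamma)\subset V_{2,\ell}$ --- and the adjoint with $v=u_m\in V_{2,2}$, then subtracting using the symmetry of $\mathsf{A}$, yields the pairing identity
\[
\int_\Omega \mathbf{g}\cdot\nabla u_m\,\mathrm{dx} = \int_\Omega f_m w\,\mathrm{dx} + \int_{\Gamma_N} g_m w\,\mathrm{ds} + \int_\Gamma h_m w\,\mathrm{ds} + \int_\Gamma (u_m - b(u_m))w\,\mathrm{ds}.
\]
Bounding via $\|w\|_\infty\leq C_\infty$ together with (\ref{bmax}), and using the pointwise estimates $|u_m| + b^\#|u_m|^{\ell-1} \leq 1+b^\#$ on $\{|u_m|\leq 1\}$ and $\leq (1+b^\#)|u_m|^{\ell-1}$ on $\{|u_m|>1\}$, one reinserts (\ref{cota1ql}) and takes the supremum over $\mathbf{g}$ via (\ref{dnorm}) to obtain (\ref{cota1q}) uniformly for $u_m$.

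Finally, I would pass to the limit. The uniform $V_{q,\ell-1}$-bounds furnish a subsequence with $u_m\rightharpoonup u$ in $W^{1,q}(\Omega)$ and in $L^{\ell-1}(\Gamma)$, and $u_m\to u$ a.e. by Rellich compactness; Vitali's theorem with the uniform $L^{\ell-1}(\Gamma)$-bound gives $b(u_m)\to b(u)$ in $L^1(\Gamma)$, so (\ref{pbu}) passes to the limit on a dense subset of admissible test functions, and (\ref{cota1q})-(\ref{cota1ql}) transfer to $u$ by lower semicontinuity. The main obstacle is the nonlinear coupling in the duality step: the residual $\int_\Gamma (u_m-b(u_m))w\,\mathrm{ds}$ involves $\|u_m\|_{\ell-1,\Gamma}^{\ell-1}$, so (\ref{cota1ql}) must be in place \emph{before} the duality bound is closed, in order to avoid any circular dependence between the two estimates.
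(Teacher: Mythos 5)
Your proof follows essentially the same route as the paper's: regularize the $L^1$-data, test with a sign-like truncation to obtain the boundary $L^{\ell-1}$-bound, then run the duality argument against the linear Robin--Neumann adjoint state $w$ controlled uniformly in $L^\infty$ by Corollary \ref{ccinf} (using $\mathsf{A}$ symmetric), and pass to the limit. The only real variant is the truncation: the paper tests with $v=T_1(u_m)$, obtaining first the mixed inequality (\ref{gum}) and then (\ref{cota1ql}) by adding $|\Gamma|$ for the set $\{|u_m|\leq 1\}$, whereas you let $T_1(u_m/\varepsilon)\to\mathrm{sign}(u_m)$, which gives the slightly sharper $\|u_m\|_{\ell-1,\Gamma}^{\ell-1}\leq\bigl(\|f\|_{1,\Omega}+\|g\|_{1,\Gamma_N}+\|h\|_{1,\Gamma}\bigr)/b_\#$ directly; either way one gets the restricted bound on $\Gamma[|u_m|>1]$ needed (one should use that restricted form, not the full (\ref{cota1ql}), to avoid double-counting $|\Gamma|$ in closing (\ref{cota1q}) with the stated constant).
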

\begin{proof}
For each $m\in \mathbb N$, take $
f_m=F_m(f)\in L^\infty(\Omega),$
$g_m=F_m (g)\in L^\infty(\Gamma_N),$
$h_m=F_m (h)\in L^\infty(\Gamma)$, with
\[
F_m(\tau)={m \tau\over m+|\tau |}.
\]
Applying Proposition \ref{exist},
there exists a unique solution $u_m\in V_{2,\ell}
$ to the following variational problem
\begin{eqnarray}\label{pbum}
\int_\Omega  (\mathsf{A}\nabla u_m)\cdot\nabla v\mathrm{dx}
+\int_\Gamma b(u_m) v\mathrm{ds}=
\int_\Omega f_mv\mathrm{dx}+\\
+\int_{\Gamma_N}g_mv\mathrm{ds}+\int_\Gamma
h_mv\mathrm{ds},\quad\forall v\in V_{2,\ell}.\nonumber
\end{eqnarray}
In particular, (\ref{pbum}) holds for all $v\in W^{1,q'}(\Omega)$ for $q'>n$.
 Defining the truncation operator
$T_1(y)={\rm sign}(y)\min\{|y|,1\}$, let us
choose $v=T_1(u_m)\in V_{2,\ell}$ as a 
test function in (\ref{pbum}), obtaining
\begin{equation}\label{gum}
b_\#\left(\int_{ \Gamma [ |u_m|>1]}|u_m|^{\ell-1}\mathrm{ds}+
\int_{ \Gamma [ |u_m|\leq 1]}|u_m|^{\ell}\mathrm{ds}\right)\leq
\| f\|_{1,\Omega}+\| g\|_{1,\Gamma_N}+\|h\|_{1,\Gamma}.
\end{equation}
Hence, we conclude that (\ref{cota1ql}) is true for $u_m$.

In order to pass to the limit (\ref{pbum}) on $m$ $(m\rightarrow\infty)$
let us establish the estimate (\ref{cota1q}) for
 $  u_m$.
Let $w\in V_{2,2}$ be the unique  weak solution to the mixed
Robin-Neumann problem (\ref{omega})-(\ref{gama}),
under $f=g=h=0$,
in accordance with Proposition \ref{exist}.
 Since $\mathsf{A}$ is symmetric, we infer that
\[ 
\int_{\Omega}    ( \mathsf{A}\nabla u_m)\cdot
\nabla w \mathrm{dx}=
\int_{\Omega}    ( \mathsf{A}\nabla w)\cdot
\nabla u_m \mathrm{dx}
=\int_{\Omega}{\bf f}\cdot\nabla u_m \mathrm{dx}-
\int_{\Gamma}w u_m \mathrm{ds},
\] 
which gathered with (\ref{pbum}) under $v=w$ reads
\begin{eqnarray}\label{fum}
\int_{\Omega}{\bf f}\cdot\nabla u_m \mathrm{dx}=
\int_{\Gamma}w u_m \mathrm{ds}-\int_{\Gamma}b(u_m) w \mathrm{ds}
+\int_{\Omega}f_mw \mathrm{dx}+\\
+\int_{\Gamma_N}g_mw \mathrm{ds}+\int_{\Gamma}h_mw \mathrm{ds}.\nonumber
\end{eqnarray}

For ${\bf f}\in {\bf L}^{q'}(\Omega)$ with $q'>n$ such that
$\|{\bf f}\|_{q',\Omega}=1$,
Corollary \ref{ccinf} guarantees that the existence of a $L^\infty$-constant
 $C_\infty$ such that
$\|w\|_{\infty,\Omega}+
\|w\|_{\infty,\partial\Omega}\leq C_\infty $.
 
By (\ref{dnorm}) with ${\bf u}=\nabla u_m $, and (\ref{fum}), we obtain 
\begin{eqnarray*}
\|\nabla u_m\|_{q,\Omega}\leq C_\infty\Big( |\Gamma|
(1+b^\#)+(1+b^\#)
\|u_m\|_{\ell-1,\Gamma[|u_m|>1]}^{\ell-1}+\\
+ \| f\|_{1,\Omega}+\|g\|_{1,\Gamma_N}+\|h\|_{1,\Gamma}
\Big).
\end{eqnarray*}
Applying  (\ref{gum}), then (\ref{cota1q}) holds for $u_m$.

Therefore, the passage to the limit as $m$ tends to infinity
is allowed, concluding the proof of Proposition \ref{W1q}.
\end{proof}

\section{Green kernel}
\label{secg}

In this Section, we establish the existence  of the Green kernel
altogether some of its properties. Here, we follow the approach introduced in 
\cite{gw} in constructing Green's function for
 the Dirichlet problem (see also \cite{lsw}).
\begin{definition}\label{def2}
For each $x\in\Omega$, we say that $G= G(x,\cdot)$ is a Green kernel associated
 to (\ref{omega})-(\ref{gama}), 
if it solves, in the distributional sense,
\begin{eqnarray}\label{pbud}
 \nabla\cdot(\mathsf{A}\nabla  G(x,\cdot))=\delta_x\quad\mbox{ in }\Omega;\\
 \mathsf{A}\nabla  G(x,\cdot)\cdot {\bf n}+b( G(x,\cdot))\chi_\Gamma=0
\quad\mbox{ on }\partial\Omega,
 \end{eqnarray}
 where $\delta_x$ is the Dirac delta function at the point
$x$. That is, there is $q>1$
such that $G$ verifies the variational formulation
\begin{equation}\label{varf}
\int_{\Omega}     \mathsf{A}\nabla G(x,\cdot)\cdot
\nabla v \mathrm{dy}+\int_\Gamma b(G(x,\cdot))v\mathrm{ds}
=v(x),\quad\forall v\in V_{q,\ell}.
\end{equation}
\end{definition}

\begin{proposition}\label{green}
Let  $n\geq 2$,  $1\leq  q<n/(n-1)$,
(A)-(B) be fulfilled,  and $\mathsf{A}$ be symmetric.
Then, for each $x\in\Omega$ and any $r>0$
such that   $r< {\rm dist}(x,\partial\Omega)$,
 there exists a unique  Green function $G=
G(x,\cdot)\in V_{q,\ell-1}\cap H^1(\Omega \setminus B_r(x))$ 
 according  to Definition \ref{def2},
and  enjoying the following estimates
\begin{eqnarray}
\label{g1}
\|\nabla G\|_{q,\Omega} &\leq &
C_\infty \left(1+(1+b^\#)(|\Gamma|+1/b_\#)
\right);\\
 \| G\|_{\ell-1,\Gamma} &\leq &( |\Gamma|+1/b_\#)^{1/(\ell-1)}
,\label{gg}
\end{eqnarray}
with
the constant $C_\infty$ being explicitly 
given in Corollary \ref{ccinf}.
Moreover, $G(x,y)\geq 0$ a.e.  $x,y\in \Omega$.
\end{proposition}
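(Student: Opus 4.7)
The plan is to construct $G(x,\cdot)$ by regularizing the Dirac mass $\delta_x$ and leveraging Proposition \ref{W1q}. For $0<r<\mathrm{dist}(x,\partial\Omega)$, set $f_r:=|B_r(x)|^{-1}\chi_{B_r(x)}\in L^\infty(\Omega)$, so that $\|f_r\|_{1,\Omega}=1$. Since $f_r\in L^t(\Omega)$ for every $t>1$, Proposition \ref{exist} produces a unique approximate Green kernel $G_r\in V_{2,\ell}$ solving (\ref{pbu}) with $\mathbf{f}=\mathbf{0}$, $g=h=0$ and $f=f_r$. Rerunning the argument of Proposition \ref{W1q} on $(G_r)$ then delivers the uniform bounds (\ref{gg}) and (\ref{g1}): the truncation $T_1(G_r)$ furnishes the $L^{\ell-1}(\Gamma)$-bound with $\|f_r\|_{1,\Omega}=1$, and duality with the $L^\infty$-bounded solution of the adjoint Robin--Neumann problem (whose $L^\infty$-constant $C_\infty$ is provided by Corollary \ref{ccinf}) furnishes the $L^q$-bound on the gradient.

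Next I would extract the limit. Reflexivity of $V_{q,\ell-1}$ and compactness of the trace operator yield a subsequence $G_{r_k}\rightharpoonup G$ weakly in $V_{q,\ell-1}$ with $G_{r_k}\to G$ a.e. on $\Gamma$. For any test function $v\in V_{q',\ell}$ with $q'>n$, the Sobolev embedding $W^{1,q'}(\Omega)\hookrightarrow C(\bar\Omega)$ and Lebesgue's differentiation theorem give $\int_\Omega f_{r_k} v\,\mathrm{dx}\to v(x)$; the principal term passes by weak convergence; and the nonlinear boundary term passes by a.e. convergence combined with the uniform $L^{\ell-1}(\Gamma)$-bound, the $(\ell-1)$-growth of $b$, and Vitali's theorem, yielding (\ref{varf}).

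For the non-negativity of $G$, I would test the approximate problem with $v=G_r^{-}:=\max(-G_r,0)\in V_{2,\ell}$, an admissible choice thanks to the $V_{2,\ell}$-regularity of $G_r$. Ellipticity (\ref{amin}) and the sign property (\ref{bmin}) give
\[
a_\#\|\nabla G_r^-\|_{2,\Omega}^2+b_\#\|G_r^-\|_{\ell,\Gamma}^\ell\leq -\int_\Omega f_r G_r^-\,\mathrm{dx}\leq 0,
\]
because $f_r\geq 0$, forcing $G_r\geq 0$ and hence $G\geq 0$ in the limit. For the $H^1$-regularity of $G$ on $\Omega\setminus B_r(x)$, I would test the approximate problem with $\eta^2 G_{r_k}$ where $\eta$ is a cutoff vanishing on $B_{r/2}(x)$ and equal to $1$ outside $B_r(x)$: for $r_k<r/2$ the data vanishes on $\mathrm{supp}(\eta)$, and a Caccioppoli-type estimate combined with the uniform $L^{\ell-1}(\Gamma)$-bound (\ref{gg}) yields uniform $H^1$-bounds on the annular region, inherited by the limit. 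Uniqueness in $V_{q,\ell-1}$ would follow from a Minty-type monotonicity argument, after approximating the difference of two candidate Green kernels by admissible test functions in $V_{q',\ell}$.

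The main obstacle is the passage to the limit in the nonlinear boundary contribution $\int_\Gamma b(G_{r_k})v\,\mathrm{ds}$: although traces converge a.e. on $\Gamma$, the integrand must be shown equi-integrable against an arbitrary $v\in V_{q',\ell}$, and the interplay between the uniform $L^{\ell-1}(\Gamma)$-bound (\ref{gg}), the growth bound (\ref{bmax}), and Vitali's theorem must be handled with care. A secondary subtlety is the admissibility of the negative-part test function $G_r^-$, which is why the approximate problem is posed at the $V_{2,\ell}$-level guaranteed by Proposition \ref{exist} rather than only at the $V_{q,\ell-1}$-level of Proposition \ref{W1q}.
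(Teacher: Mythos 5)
Your proposal follows essentially the same route as the paper: approximate $\delta_x$ by $|B_\rho(x)|^{-1}\chi_{B_\rho(x)}$, solve the regularized problem via Proposition~\ref{exist}, transfer the $\rho$-uniform $V_{q,\ell-1}$-bounds from Proposition~\ref{W1q} (exploiting $\|f_\rho\|_{1,\Omega}=1$), pass to a weak subsequential limit, and test with the negative part for nonnegativity (your $v=G_r^-$ is, up to the factor $-2$, the paper's test function $G^\rho-|G^\rho|$). The difference is that you spell out steps the paper leaves implicit or even asserts without justification: the equi-integrability/Vitali argument needed to pass to the limit in $\int_\Gamma b(G_{r_k})v\,\mathrm{ds}$, the Caccioppoli estimate on an annulus away from $x$ that actually underpins the claimed $H^1(\Omega\setminus B_r(x))$-membership (the paper's displayed bound (\ref{grho2}) is over all of $\Omega$ and degenerates as $\rho\to0^+$, so it does not by itself yield this), and a Minty-type argument for uniqueness, which the paper states but does not prove. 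So this is the same construction with genuinely useful gaps filled; one small caveat is that your Vitali step needs a touch more care than stated, since a uniform $L^{\ell-1}(\Gamma)$-bound on $G_{r_k}$ gives only $L^1(\Gamma)$-boundedness of $b(G_{r_k})$, not equi-integrability by itself; one way to close it is to combine a.e.\ convergence with the a priori strong $L^p(\Gamma)$ convergence for $p<\ell-1$ that follows from the trace compactness in $V_{q,\ell-1}$, or to upgrade the boundary bound via (\ref{gum}).
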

\begin{proof}
Let $x\in\Omega$ and $\rho>0$ be such that $B_\rho(x)\subset\subset\Omega$.
Thanks to Proposition \ref{exist} with  ${\bf f}={\bf 0}$ a.e. in $\Omega$,
$g,h=0$ a.e. on, respectively, $\Gamma_N$ and $\Gamma$,
 and $f=\chi_{B_\rho(x)}/|B_\rho(x)| $ belonging to
$ L^{2n/(n+2)}(\Omega)$ if 
$n>2$, and to $L^{2}(\Omega)$ if $n=2$,
there exists
$G^\rho=G^\rho(x,\cdot)\in V_{2,\ell}$ being the unique solution to
\begin{equation}\label{pbgreen}
\int_{\Omega}     \mathsf{A}\nabla G^\rho\cdot
\nabla v \mathrm{dy}+\int_\Gamma b(G^\rho)v\mathrm{ds}
={1\over |B_\rho(x)|}\int_{B_\rho(x)}v \mathrm{dy},
\end{equation}
for all $v\in V_{2,\ell} $.
In particular, if $\ell=2$  (\ref{cotau}) reads
\begin{equation}\label{grho2}
\|\nabla G^\rho\|_{2,\Omega}+
\| G^\rho\|_{2,\Gamma}\leq{{2}\over\min\{ a_\#,b_\#\}}\times\left\{
\begin{array}{ll}
 S_{2,2} \omega_n^{{1\over n}-{1\over 2}}\rho^{1-n/2}&\mbox{ if } n>2\\
\sqrt{  |\Omega |+1\over \pi} S_{1,2}
\rho^{-1}&\mbox{ if } n=2
 \end{array}\right. .
\end{equation}
Therefore, for any
 $r>0$ such that $B_r(x)\subset\subset\Omega$,
 there exists $G\in H^1(\Omega\setminus B_r(x))$
such that
\[
G^\rho\rightharpoonup G\quad \mbox{in } H^1(\Omega\setminus B_r(x))\quad
\mbox{ as $\rho\rightarrow  0^+$.}
\]

In order to $G$ verify
\begin{equation}\label{defgreen}
G(x,y)=\lim_{\rho\rightarrow 0}
G^\rho(x,y),\quad
\forall x\in\Omega, \ \mbox{a.e. }y\in\Omega, \ x\not=y,
\end{equation}
we observe that
the $V_{q,\ell-1}$-estimates  (\ref{g1})-(\ref{gg})
 are true for $G^\rho$ due to
(\ref{cota1q})-(\ref{cota1ql}), by
applying Proposition \ref{W1q} with
$g,h=0$, and $f=\chi_{B_\rho(x)}/|B_\rho(x)|\in L^{1}(\Omega)$.
 Then, we can extract a subsequence of $G^\rho,$ still denoted by
$G^\rho,$ 
weakly converging to $G$ in $V_{q,\ell-1}$
as $\rho$ tends to $0$, with $G\in V_{q,\ell-1}$
 solving (\ref{varf}) for all $v\in W^{1,q'}(\Omega)$.
A well-known property of passage to the weak limit implies
 (\ref{g1})-(\ref{gg}).
 
In order to prove the nonnegativeness assertion, first
calculate
\[a_\#
\int_{\Omega}     |\nabla (G^\rho-|G^\rho|)|^2\mathrm{dy}
\leq {2\over |B_\rho(x)|}
\left(\int_{B_\rho(x)}G^\rho \mathrm{dy}-\int_{B_\rho(x)}|G^\rho| 
\mathrm{dy}\right)\leq 0.
\]
Then, $G^\rho=|G^\rho|$, and by passing to the limit as $\rho$ tends to $0$,
the nonnegativeness claim holds,
which completes the proof of Proposition \ref{green}.
\end{proof}

\section{Robin-Neumann problem ($\ell=2$)}
\label{lqc}

 In the two-dimensional space, Proposition \ref{exist}
 leads $H^1$ solution for  the $L^p$-data, with an arbitrary $p>1$.
 Our concern is then the existence of weak solutions and the derivation of
their estimates in the $n$-dimensional space: $n>2$.
\begin{proposition}\label{W1qp}
Let  ${\bf f}={\bf 0}$ a.e. in $\Omega$,
 $f\in L^t(\Omega)$ with  $t\leq 2n/(n+2)$,  $g\in L^s(\Gamma_N)$ and
$h\in L^s(\Gamma)$ with
 $s\leq 2(n-1)/n$,   and $\mathsf{A}$ be a symmetric
matrix satisfying the assumption (A).
Under the assumption (\ref{bstar}) with $b_*=1$,
 there exists $u\in W^{1,q}(\Omega)$
 solving (\ref{pbu}) for every $1< q < 2(n-1)p/[2(n-1)-p]$
with $p=\min\{t,s\}$.
Moreover, we have the following estimate
\begin{equation}
\| \nabla u \|_{q,\Omega}
\leq\mathcal{M}_q\left({1\over a_\#}+{1\over\sqrt{a_\#}}
\right)\left(
\mathcal{K}_{t',q'}
 \| f\|_{t,\Omega}+\mathcal{K}_{s',q'}(\|g\|_{s,\Gamma_N}+\|h\|_{s,\Gamma})
\right),
\label{li}
\end{equation}
with
\begin{eqnarray*}
\mathcal{M}_q=  |\Omega|^{(n-2)/[2(n-1) n]} S_{2,2}+K_{2,2}
+\\
+2|\Omega|^{{1\over q}-{1\over 2}}
\left( S_{{n\over n+1},{n\over n+1}}(|\Omega|^{{1\over 2}+{1\over n}}
+|\partial\Omega|^{{1\over 2}+{1\over n}})+K_{1,1}
(|\Omega|^{1\over 2}+
|\partial\Omega|^{{1\over 2}})\right)
.\end{eqnarray*}
\end{proposition}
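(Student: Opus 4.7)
The plan is a duality plus approximation argument, analogous to the construction of the Green kernel in Section \ref{secg} but replacing the $L^\infty$-bound of Corollary \ref{ccinf} on the adjoint with the quantitative $L^q$-estimates of Proposition \ref{qestimates}. Symmetry of $\mathsf{A}$ and linearity of the Robin condition $b(u)=b_*u$ are essential.

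First I would regularise the data via the Nemytskii truncation $F_m(\tau)=m\tau/(m+|\tau|)$ used in the proof of Proposition \ref{W1q}, producing bounded $f_m$, $g_m$, $h_m$. Proposition \ref{exist} with $\ell=2$ then furnishes a unique $u_m\in H^1(\Omega)$ solving (\ref{pbu}) with the regularised data, so that the entire task reduces to a uniform $W^{1,q}$-bound on $\{u_m\}$. For each ${\bf F}\in {\bf L}^{q'}(\Omega)$ with $\|{\bf F}\|_{q',\Omega}=1$, the admissible range forces $q<2$ and hence $q'>2$, so Proposition \ref{exist} applies once more to yield an adjoint solution $w\in H^1(\Omega)$ of
\[
\int_\Omega (\mathsf{A}\nabla w)\cdot\nabla v\,\mathrm{dx}+\int_\Gamma wv\,\mathrm{ds}=\int_\Omega {\bf F}\cdot\nabla v\,\mathrm{dx},\qquad\forall v\in H^1(\Omega).
\]
Testing the primal for $u_m$ against $v=w$ and the adjoint against $v=u_m$, the symmetry of $\mathsf{A}$ cancels the two bilinear terms and yields the duality identity
\[
\int_\Omega {\bf F}\cdot\nabla u_m\,\mathrm{dx}=\int_\Omega f_m w\,\mathrm{dx}+\int_{\Gamma_N}g_m w\,\mathrm{ds}+\int_\Gamma h_m w\,\mathrm{ds}.
\]

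A threefold H\"older inequality bounds the right-hand side by $\|f\|_{t,\Omega}\|w\|_{t',\Omega}+\|g\|_{s,\Gamma_N}\|w\|_{s',\Gamma_N}+\|h\|_{s,\Gamma}\|w\|_{s',\Gamma}$, and the $L^{t'}(\Omega)$- and $L^{s'}(\partial\Omega)$-norms of the adjoint are supplied by Proposition \ref{qestimates} applied to $w$ with ${\bf f}={\bf F}$ (of unit $L^{q'}$-norm) and $f=g=h=0$, the exponents there reducing to $p=r=q'$ and the constants collapsing to $\mathcal{K}_{t',q'}$ and $\mathcal{K}_{s',q'}$. Taking the supremum over unit ${\bf F}$ via the dual characterisation (\ref{dnorm}) of the $L^q$-norm produces the uniform bound on $\|\nabla u_m\|_{q,\Omega}$ of the form (\ref{li}). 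A standard weak-compactness argument extracts a subsequence $u_m\rightharpoonup u$ in $W^{1,q}(\Omega)$, and linearity of the equation permits passage to the limit in (\ref{pbu}) tested against any $v\in W^{1,q'}(\Omega)$.

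The main obstacle, and the source of the rather intricate expression of $\mathcal{M}_q$, is to convert the estimate (\ref{cotaps}) of Proposition \ref{qestimates}, which is nonlinear in the data through the level-set factor $|\bar\Omega[|w|>1]|^{(n-2)/[2(n-1)]-\delta}$, into a genuinely linear bound $\|w\|_{t',\Omega}\leq\mathrm{const}\cdot\|{\bf F}\|_{q',\Omega}$. This I would do by exploiting linearity of the adjoint to rescale $w$, combined with auxiliary H\"older and Sobolev/trace inequalities that re-express the level-set measure through the basic energy estimate on $w$. Those auxiliary steps, bridging $L^{q'}$-data to the $H^1$-energy and back to $L^{t'}$ and $L^{s'}$, are precisely what generates the geometric terms $|\Omega|^{1/q-1/2}$, $S_{n/(n+1),n/(n+1)}$ and $K_{1,1}$ appearing inside $\mathcal{M}_q$.
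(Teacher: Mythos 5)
Your plan follows the paper's argument in every essential respect: truncation by $F_m$, the duality identity obtained by testing the primal against the adjoint $w$ and invoking symmetry of $\mathsf{A}$, H\"older splitting into $\|w\|_{t',\Omega}$ and $\|w\|_{s',\partial\Omega}$, an appeal to Proposition~\ref{qestimates} for the adjoint (with $f=g=h=0$, $p=r=q'$, $\delta=1/2-1/q'$), the dual characterisation (\ref{dnorm}), and the passage to the limit in $m$. You also correctly pinpoint the level-set factor $|\bar\Omega[|w|>1]|^{(n-2)/[2(n-1)]-\delta}$ as the crux and correctly anticipate that the geometric ingredients of $\mathcal{M}_q$ arise from bridging the $L^{q'}$-data to the $H^1$-energy of $w$ (the paper's (\ref{cotau2})) and then via Sobolev/trace embeddings to $L^1(\Omega)\times L^1(\partial\Omega)$ (the paper's (\ref{ww})). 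The one point at which your description departs from what the paper does is the ``rescale $w$'' idea: no rescaling occurs, and none would mesh with the fixed threshold $k_0=1$ built into Proposition~\ref{qestimates}. Instead, the paper bounds $|\bar\Omega[|w|>1]|$ by $\|w\|_{1,\Omega}+\|w\|_{1,\partial\Omega}$ through Chebyshev's inequality, discards the exponent $(n-2)/[2(n-1)]-\delta=(2(n-1)-q')/[2(n-1)q']<1$ in favour of $1$, and then chains (\ref{ww}) with (\ref{cotau2}). Be aware, however, that this replacement of $|\bar\Omega[|w|>1]|^{\alpha}$ ($0<\alpha<1$) by $\|w\|_{1,\Omega}+\|w\|_{1,\partial\Omega}$ is not automatic when the latter quantity is below one; neither you nor the paper verifies this, so the final constant $\mathcal{M}_q$ tacitly presupposes that the $L^1$-mass of the normalised adjoint is at least one, else the cruder bound $|\bar\Omega[|w|>1]|\leq|\Omega|+|\partial\Omega|$ should be substituted in that term.
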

\begin{proof}
For each $m\in \mathbb N$, take the approximations
$f_m$, $g_m$, and $h_m$ as in the proof of Proposition \ref{W1q},
and the corresponding unique solution $u_m\in V_{2,2}
$ to the  variational problem (\ref{pbum}).
Moreover, (\ref{cota1ql}) is true for $u_m$ ($\ell=2$).

Let $w\in V_{2,2}$ be the unique  weak solution to the mixed 
Robin-Neumann problem (\ref{omega})-(\ref{gama}),
under $f=g=h=0$, such that  (\ref{fum}) reads
\begin{equation}
\int_{\Omega}{\bf f}\cdot\nabla u_m \mathrm{dx}=
\int_{\Omega}f_mw \mathrm{dx}
+\int_{\Gamma_N}g_mw \mathrm{ds}+\int_{\Gamma}h_mw \mathrm{ds}.\label{umfl}
\end{equation}
Moreover, for $q'\geq 2$ (\ref{cotau}) reads
\begin{equation}\label{cotau2}
\|w\|_{1,2,2}\leq \left({1\over a_\#}+{1\over\sqrt{a_\#}}
\right)|\Omega|^{1/q-1/2}
\|{\bf f}\|_{q',\Omega}.
\end{equation}
Observe that
\begin{eqnarray}\label{ww}
\|w\|_{1,\Omega}+\|w\|_{1,\partial\Omega}\leq 
\left(S_{{n\over n+1},{n\over n+1}}\left(|\Omega|^{1/2+1/n}
+|\Gamma|^{1/2+1/n}\right)+\right. \\
\left.+K_{1,1}\left(|\Omega|^{1/2}
+|\Gamma|^{1/2}\right)\right)\|w\|_{1,2,2}.\nonumber
\end{eqnarray}

For any $t\leq 2n/(n+2)$, $s\leq 2(n-1)/n$,
 and $q<2(n-1)p/[2(n-1)-p]$ with $p=\min\{t,s\}$, which means
 $2n/(n-2)\leq t'<2(n-1)q'/[2(n-1)-q']$, and 
 $2(n-1)/(n-2)\leq s'<2(n-1)q'/[2(n-1)-q']$ if $2<q'<2(n-1)$,
 Proposition \ref{qestimates} (with $\delta=1/2-1/q'$ since $h\equiv 0$)
 yields
\begin{eqnarray*}
\|w\|_{t',\Omega}
\leq \mathcal{K}_{t',\delta}\left(
 (|\Omega|^{n-2\over 2(n-1) n} S_{2,2}+K_{2,2})
\left({1\over a_\#}+{1\over\sqrt{a_\#}}\right)
 \|{\bf f}\|_{q',\Omega}+\right.\\
\left.+2(
\|w\|_{1,\Omega}+\|w\|_{1,\partial\Omega})
\right)
\leq  \mathcal{K}_{t',\delta}\mathcal{M}_q
\left({1\over a_\#}+{1\over\sqrt{a_\#}}\right)
 \|{\bf f}\|_{q',\Omega},
\end{eqnarray*}
considering that $(2(n-1)-q')/[2(n-1)q']<1$ is taken into account,
and applying (\ref{ww}) 
accomplished with (\ref{cotau2}).
Analogously
\[
 \| w\|_{s',\partial\Omega}
\leq  \mathcal{K}_{s',\delta}\mathcal{M}_q
\left({1\over a_\#}+{1\over\sqrt{a_\#}}\right)
 \|{\bf f}\|_{q',\Omega}.
 \]

By (\ref{dnorm}) with ${\bf u}=\nabla u_m $,  we infer from
(\ref{umfl}) that (\ref{li})
holds for $u_m$. Therefore, by passage to the limit as $m$ tends to
infinity, we conclude the claimed result.
\end{proof}

\end{document}